\newenvironment{keywords}{%
  \subsection*{Keywords}
	}{}
\crefname{assumption}{Assumption}{Assumptions}
\Crefname{assumption}{Assumption}{Assumptions}
\crefname{condition}{Condition}{Conditions}
\Crefname{condition}{Condition}{Conditions}
\newtheorem{theorem}{Theorem}
\newtheorem{proposition}[theorem]{Proposition}
\newtheorem{remark}[theorem]{Remark}
\newtheorem{definition}[theorem]{Definition}
\newtheorem{assumption}{Assumption}
\newtheorem{corollary}[theorem]{Corollary}
\newtheorem{lemma}[theorem]{Lemma}
\newcommand{\R}{\mathbb{R}}
\newcommand{\sobj}{f}
\newcommand{\nobj}{\varphi}
\newcommand{\ared}{A}
\newcommand{\cred}{C}
\newcommand{\pred}{P}
\newcommand{\aredr}{{\rm ared}}
\newcommand{\credr}{{\rm cred}}
\newcommand{\predr}{{\rm pred}}
\newcommand{\kfcd}{\kappa_{\rm{fcd}}}
\newcommand{\kbmh}{\kappa_{\rm{bmh}}}
\newcommand{\kgrad}{\kappa_{\rm{grad}}}
\newcommand{\kef}{\kappa_{\rm{val}}}
\newcommand{\kobj}{\kappa_{\rm{obj}}}
\newcommand{\dom}{\mathrm{dom}\,}
\newcommand{\prox}[1]{\mathrm{prox}_{#1}}
\newcommand{\argmin}{\operatorname*{arg\,min}}
\newcommand{\dpsi}{\Psi_{k+1} - \Psi_k}
\newcommand{\one}{{\mathbbm{1}}}
\newcommand{\filt}{\mathcal{F}}
\title{ProxSTORM---A Stochastic Trust-Region Algorithm for Nonsmooth Optimization}
\author{
Robert J. Baraldi\footnotemark[1],\and
Aurya Javeed\footnotemark[1],\and
Drew P. Kouri\footnotemark[1], \and
Katya Scheinberg\footnotemark[2]
}
\begin{document}

\maketitle
\renewcommand{\thefootnote}{\fnsymbol{footnote}}
\footnotetext[1]{
Optimization and Uncertainty Quantification, 
Sandia National Laboratories, PO Box 5800, 
Albuquerque, 87185-1320, NM, USA}
\footnotetext[2]{
H. Milton Stewart School of Industrial and Systems Engineering, 
Georgia Institute of Technology, 
755 Ferst Drive, Atlanta, 30332-0205, 
GA, USA
}

\begin{abstract}
We develop a stochastic trust-region algorithm for minimizing the sum of a Lipschitz-smooth but possibly nonconvex function and a convex but possibly nonsmooth function.
Such a problem class arises in many applications, including data science, operations research, and 
PDE-constrained optimization. 
This algorithm, which we call ProxSTORM, generalizes STORM \cite{storm,storm-rates}---a stochastic trust-region algorithm for the unconstrained optimization of smooth functions---and the inexact deterministic proximal trust-region algorithm in \cite{baraldi.2022}.
In the absence of a nonsmooth term, we recover the original STORM algorithm,  moreover, we improve and simplify certain aspects of STORM analysis, while maintaining 
 STORM martingale framework arguments to prove global convergence and an expected complexity bound.
We demonstrate ProxSTORM capabilities on neural network training and topology optimization under uncertainty.
\end{abstract}

\begin{keywords}
nonsmooth optimization, stochastic optimization, trust-region algorithm, proximal mapping, convex constraints
\end{keywords}

\section{Introduction}
We develop a new stochastic trust-region algorithm for the following class 
of composite optimization problems:
\begin{equation}\label{eq:p}
  \underset{x\in\R^d}{\text{minimize}}\;\; \left\{ \sobj(x) + \nobj(x) \right\},
\end{equation}
where $\nobj$ is convex but generally nonsmooth and $\sobj$ is smooth but typically nonconvex.
We require that $\sobj$ and $\nobj$ satisfy the following conditions.
\begin{assumption}\label{a:p}
\phantom{-}
\begin{enumerate}
\item
The function $\nobj:\R^d\to (-\infty,\infty]$ is proper, closed, and convex.
\item
The function $\sobj$ is $L$-smooth on $\dom\nobj := \{x: \nobj(x) <\infty\}$, i.e., $\sobj$ is differentiable on an open set $U\supseteq \dom{\nobj}$ and has $L$-Lipschitz continuous gradient. That is, there exists $L>0$ such that for all $x,y\in U$
$$
  \|\nabla \sobj(y) - \nabla \sobj(x)\| \le L\|y - x\|.
$$
\item The objective function $\sobj+\nobj$ is bounded below on $\dom\nobj$.
\end{enumerate}
\end{assumption}
Our algorithm exploits $\nobj$ whose proximal operator, defined as
\begin{equation}\label{eq:pm}
  \prox{r\nobj}(x) \coloneqq \argmin_{y\in\R^d} \left\{\nobj(y) + \frac{1}{2r}\|y - x\|^2\right\}
\end{equation}
can be evaluated exactly.
In general, \eqref{eq:pm} is a nontrivial optimization problem; an example is the proximal mapping of total variation $\|\nabla x\|$, even in finite dimensions. 
However, many common functions evaluated inexpensively or even analytically, such as the $L^1$-norm and  projections onto convex constraints. 
We refer the reader to \cite{parikh.2014} for an in-depth treatment. 
Our problem class is thus suitable 
for such $\nobj$ and the case that, e.g., 
$\sobj$ is an expected loss with respect to a data distribution.
For $\sobj$, we allow stochastic models and estimates that can be arbitrarily inaccurate, provided that:
($i$) these quantities are sufficiently accurate, sufficiently often; and
($ii$) the objective function $\sobj+\nobj$ does not increase too much when estimates of $\sobj$ are inaccurate.
These assumptions are both practical and widespread in the literature;  c.f. \cite{storm,storm-rates} and citations therein.

It is in this context that we develop a stochastic algorithm 
for \eqref{eq:p}, called ProxSTORM. 
ProxSTORM combines STORM, 
a trust-region framework for \underline{st}ochastic \underline{o}ptimization with \underline{r}andom \underline{m}odels, 
\cite{storm,storm-rates} with the proximal trust-region framework in \cite{baraldi.2022}.
The latter framework imposes the same conditions 
as Assumption \ref{a:p}.1 and permits \emph{deterministic} inexactness concerning $\sobj$.
The inexactness criteria used in in \cite{baraldi.2022} 
stipulates that models and estimates of $f$ always 
satisfy error bounds decreasing to zero as the algorithm progresses.
In many practical applications, however, inexactness is \emph{stochastic}, resulting from sampled approximations of expectations or sporadic failures of subroutines used to evaluate $\sobj$ that arise from sophisticated simulations \cite{storm}.
To accommodate stochastic inexactness in the context of \eqref{eq:p}, 
we generalize STORM to accommodate nonsmooth $\nobj$. 
In keeping with \cite{baraldi.2022}, we use the norm of the $\sobj+\nobj$ \emph{proximal gradient} (PG) step as our stationarity measure; that is, ProxSTORM terminates 
when the norm of
\begin{equation}\label{eq:gg}
  h(x) \coloneqq \frac{x - \prox{r\nobj}\left(x - r\nabla f(x)\right)}{r},
\end{equation}
is less than a user-supplied tolerance. 
Here, $r$ is a constant, user-specified parameter,
that does not affect the convergence analysis.
A stationary point $x^*$ of $\sobj+\nobj$ 
satisfies $h(x^*) = 0$ with $h$ reducing to $\nabla \sobj$ when $\nobj\equiv 0$.
Our contribution is a nonsmooth, stochastic algorithm 
that accommodates $\nobj$ 
while recovering theory no worse than the theory for STORM when $\nobj\equiv 0$.
In adjusting the analysis from STORM, we primarily
utilize both the assumption that $\sobj+\nobj$ does not increase 
too much when estimates of $sobj$ are inaccurate, 
and the nonexpansivity of the proximal mapping from 
\cite[Proposition 12.27]{bauschke.2017}:
\begin{equation}\label{eq:ne}
  \|\prox{r\nobj}(y) - \prox{r\nobj}(x)\|\le \|y - x\|
  \quad\text{for all}\quad
  x, y\in\R^d.
\end{equation}
ProxSTORM focuses in particular on \eqref{eq:p} with
stochastically approximated $\sobj$ and deterministic $\nobj$.
For example, $\sobj$ can be a mean over some uncertain variable, 
whereas $\nobj$ can be an $L^1$-norm or the indicator onto convex constraints. 
Several recent papers propose optimization algorithms for stochastic objective function inexactness and deterministic constraints.
These methods, however, do not involve nonsmooth functions and are based on constrained optimization techniques, such as sequential quadratic programming \cite{BeraCurtRobiZhou21,CurtJianWang24b} and interior point methods \cite{CurtKungRobiWang23,CurtJianWang24b}.
Our ProxSTORM algorithm only addresses convex constraints but our approach and the assumptions of our analysis are significantly simpler and easier to enforce than those for general stochastic constrained optimization.
Aside from STORM \cite{storm}, other stochastic trust-region methods include \cite{cao2024first,Grattonetal2017}.
We find, however, that the framework of STORM is the most suitable for our purposes since it is the most general and also yields expected complexity bounds.

In addition to the novel ProxSTORM algorithm and its analysis this paper also contributes to the understanding of the original STORM algorithm. Via an improved analysis (which applies both to ProxSTORM and STORM) we make it evident that function estimates are utilized only via their differences - that is what we need to estimate if a function {\em change} rather than the function itself. This leads to an important revelation that STORM (and ProxSTORM) have sample complexity which is comparable with any other algorithm when applied to problems in many setting including machine learning and empirical risk minimization. 

The paper is organized as follows.
In Section~\ref{sec:ps}, we present the ProxSTORM algorithm.
In Section~\ref{sec:st}, we formalize the notation we use to describe the stochasticity of the algorithm.
Then, in Section~\ref{sec:an}, we analyze ProxSTORM.
We combine Assumption~\ref{a:p} with stochasticity assumptions (Section~\ref{sec:as}) to derive an expected decrease result (Section~\ref{sec:ed}).
From the expected decrease result, we establish global first-order convergence (Section~\ref{sec:gc}) and an expected complexity bound (Section~\ref{sec:co}).
In Section~\ref{sec:oracles} we discuss how the assumptions on function and gradient estimates used in the analysis can be satisfied in applications and the resulting total oracle 
complexity.
In Section~\ref{sec:ex}, we apply ProxSTORM to $\ell^1$-regularized neural network training and a topology optimization problem.
In Section~\ref{sec:cn}, we conclude.

\section{ProxSTORM Algorithm}\label{sec:ps}

Given a (user-specified) starting point $x_0$, ProxSTORM generates a sequence of iterates $x_k$, $k = 1, 2,\ldots$, where $x_{k+1}=x_k+s_k$ on successful iterations and $x_{k+1}=x_k$ on unsuccessful iterations.
Each iteration consists of computing a trial step $s_k$ that is an approximate solution to the trust-region subproblem
\begin{equation*}
  \underset{||s||\le\delta_k}{\text{minimize}}\;\; \{m_k(x_k+s) + \nobj(x_k+s)\}.
\end{equation*}
Here, $m_k$ is a random model of the smooth part of the objective function, $f$, and $\delta_k$ is the radius of the trust region around $x_k$, wherein $m_k$ is presumed valid.
We take
$$
  h_k\coloneqq \frac{x_k - \prox{r\nobj}(x_k-r\nabla m_k(x_k))}{r},
$$
which is the proximal gradient of $m_k+\nobj$ at $x_k$, and
\begin{equation}\label{eq:b}
  b_k \coloneqq 1+\max_{\|x-x_k\| \le \delta_k}\max_{w\neq 0} \frac{|w^\top \nabla^2 m_k(x)w|}{\|w\|^2}.
\end{equation}
We require that approximate subproblem solutions 
$s_k$ satisfy trust-region feasibility
and a generalized notion of fraction of Cauchy decrease \cite{baraldi.2022,conn.2000}:
\begin{enumerate}
\item[(S1)]
$s_k$ belongs to the trust region (i.e., the ball of radius $\delta_k$ around $x_k$); 
and
\item[(S2)]
the predicted reduction, $\predr_k$, satisfies the fraction of Cauchy decrease condition, i.e., there exists $\kfcd > 0$ independent of $k$ such that
\end{enumerate}\vspace{-1em}
\begin{align*}
  \predr_k &\coloneqq  m_k(x_k)+\nobj(x_k) - m_k(x_k+s_k)-\nobj(x_k+s_k)
  \ge \kfcd\|h_k\|\min\left\{\frac{\|h_k\|}{b_k}, \delta_k\right\}.
\end{align*}
Condition (S2) is analogous to the canonical fraction of Cauchy decrease condition for smooth problems \cite[AA.1b]{conn.2000}, and recovers the smooth
condition when $\nobj \equiv 0$. 
Given $s_k$ satisfying (S1) and (S2), ProxSTORM either accepts or rejects $s_k$ (corresponding to a successful or unsuccessful iteration, respectively) by comparing $\predr_k$ with an approximation of the actual reduction,
$$
  \aredr_k \coloneqq \sobj(x_k) + \nobj(x_k) - \sobj(x_k+s_k) - \nobj(x_k+s_k).
$$
We call this approximation the \emph{computed reduction} and denote it as $\credr_k$.
This approximation corresponds to only evaluating 
stochastic estimates function values.
A standard step acceptance condition for inexact trust-region algorithms is
$$
  \frac{\credr_k}{\predr_k}\ge \eta_1.
$$
Similar to STORM, ProxSTORM uses this step acceptance condition and requires that the trust-region radius be at least a constant factor smaller than the stationarity measure.
Specifically, to accept a step, ProxSTORM stipulates that for fixed (user-specified) parameters $\eta_1\in (0,1)$ and $\eta_2 > 0$,
\begin{equation}\label{eq:acceptance1}
  \frac{\credr_k}{\predr_k}\ge \eta_1
  \qquad\text{and}\qquad
  \|h_k\|\ge \eta_2\delta_k.
\end{equation}
When \eqref{eq:acceptance1} is satisfied, the iterate and trust-region radius is updated as
\begin{equation}\label{eq:successful}
  x_{k+1}=x_k+s_k \qquad\text{and}\qquad \delta_{k+1}=\min\{\gamma\delta_k,\delta_{\text{max}}\}.
\end{equation}
Here, $\delta_{\text{max}} := \gamma^\ell\delta_0$, where $\gamma > 1$, $\ell\in\mathbb{N}$, and $\delta_0 > 0$ are fixed (user-specified) parameters.
When \eqref{eq:acceptance1} is not satisfied,
\begin{equation}\label{eq:unsuccessful}
  x_{k+1}= x_k
  \qquad\text{and}\qquad
  \delta_{k+1}= \gamma^{-1}\delta_k.\hspace{3.3em}
\end{equation}
We state ProxSTORM as Algorithm~\ref{alg:ps}.
Note that Algorithm~\ref{alg:ps} checks $\|h_k\|<\eta_2\delta_k$ before computing $s_k$ since $s_k$ is unused when that inequality holds, i.e., $s_k$ will be rejected.
\begin{algorithm}[H]
\begin{algorithmic}
\REQUIRE the following non-random quantities:
\vspace{-0.5em}
\begin{itemize}
\item parameters $\eta_1\in (0,1)$, $\eta_2 > 0$, $\gamma > 1$, $\ell\in\mathbb{N}$,
\item
initial iterate $x_0\gets x \in \dom{\nobj}$,
\item
initial trust-region radius $\delta_0 > 0$.
\end{itemize}
\vspace{-0.5em}
\FOR{$k=0,1,\ldots$}
\STATE \textbf{Select Model:} Choose a model $m_k$.
\IF{$\|h_k\|<\eta_2\delta_k$}
  \STATE \emph{Unsuccessful Step:} Set $x_{k+1}$ and $\delta_{k+1}$ according to \eqref{eq:unsuccessful}.
  \STATE $\credr_k = \aredr_k = 0$
  \STATE \textbf{continue}
\ENDIF
  \STATE \textbf{Compute Step:} Compute $s_k$ that satisfies (S1) and (S2).
  \STATE \textbf{Compute Reduction:} Compute $\credr_k$.
  \IF{$\frac{\credr_k}{\predr_k}\ge\eta_1$}
    \STATE \emph{Successful Step:} Set $x_{k+1}$ and $\delta_{k+1}$ according to \eqref{eq:successful}.
  \ELSE
    \STATE \emph{Unsuccessful Step:} Set $x_{k+1}$ and $\delta_{k+1}$ according to \eqref{eq:unsuccessful}.
  \ENDIF
\ENDFOR
\end{algorithmic}
\caption{ProxSTORM---a stochastic trust-region method for minimizing $\sobj+\nobj$.}
\label{alg:ps}
\end{algorithm}

The primary difference between Algorithm~\ref{alg:ps} and STORM is the nonsmooth term $\nobj$, which appears in the trust region subproblem and in the $\aredr_k$ term that $\credr_k$ approximates.
Due to $\nobj$, the $h_k$ in Algorithm~\ref{alg:ps} is not the gradient of $m_k$ as in STORM, but the weighted proximal gradient step of $m_k+\nobj$.
Our algorithm is also similar to the deterministic framework \cite{baraldi.2022}, but we have the additional step acceptance criteria $\|h_k\|\ge \eta_2\delta_k$ and a different trust-region update;
see \cite[Algorithm 1]{baraldi.2022} for a full description. 
Our choice of trust-region update is matter of convenience, and while a more flexible update is possible, it complicates the analysis. 

\paragraph{Computing the Trial Step $s_k$}
There are several algorithms for generating trial steps that satisfy (S1) and (S2).
See \cite{baraldi.2025}.
Each of the methods in \cite{baraldi.2025} employs a quadratic model
$$
  m_k(s+x_k) = \tfrac{1}{2}s^\top Q_k s + g_k^\top s,
$$
where $g_k=\nabla m_k(x_k) \approx \nabla f(x_k)$ and $Q_k$ is a symmetric $n\times n$ matrix that characterizes the curvature of $f$ around $x_k$; for instance, $Q_k$ can be the Hessian of $f$ when it exists or an approximation thereof.
The methods in \cite{baraldi.2025} first determine a Cauchy point.
There are two Cauchy point definitions in \cite{baraldi.2025}, but we only discuss the original Cauchy point introduced in \cite{baraldi.2022}.
Let $p_k(r)$ denote the Cauchy arc
$$
  p_k(r) \coloneqq \prox{r\nobj}(x_k-r\nabla m_k(x_k)).
$$
To determine a Cauchy point, one can employ the bi-directional proximal search presented as \cite[Algorithm~2]{baraldi.2022}.
The search produces a step length $r_k > 0$ that satisfies \cite[(25)]{baraldi.2022}, in which case
the Cauchy step defined by
$$
  s^c_k \coloneqq p_k(r_k) - x_k,
$$
satisfies (S1) and (S2).
The methods in \cite{baraldi.2025} improve upon the Cauchy step $s^c_k$ with iterative procedures such as spectral proximal gradient, proximal nonlinear conjugate gradient, or semismooth Newton.
These methods produce descending model values at each iteration while enforcing the trust-region constraint, so the computed trial step $s_k$ continues to satisfy (S1) and (S2).
For our numerical results, we use \cite[Algorithm~5]{baraldi.2022}---a spectral proximal gradient method.

\section{Stochastic Formalism}\label{sec:st}

In the preceding section, we introduced ProxSTORM in terms of a realization of the stochastic algorithm, which we interpret as a point $\omega$ in a probability space $(\Omega,\mathcal{F},\mathbb{P})$.
Each $\omega$ encapsulates \emph{all} of the randomness when running the algorithm.
The realizations are elements of the set $\Omega$, $\mathcal{F}\subseteq 2^\Omega$ is a $\sigma$-algebra, and $\mathbb{P}:\mathcal{F}\to[0,1]$ is a probability measure.
These concepts are critical to proving the global convergence
(Section~\ref{sec:an}) and establishing complexity results (Sections~\ref{sec:an} and~\ref{sec:oracles})
of ProxSTORM.
Randomness enters the algorithm through the models $m_k$ of $\sobj$ and through the computed reductions $\credr_k$.
These quantities are in fact \emph{random variables}, i.e., measurable functions on $(\Omega,\mathcal{F},\mathbb{P})$.
Entities in the algorithm that depend on them are thus random variables as well.
We generally use capital letters for random variables and lowercase text for their realizations.
Concretely, for a realization $\omega$ of ProxSTORM at iteration $k$, the model and the computed reduction are
\begin{equation}\label{eq:realizations}
  m_k = M_k(\cdot;\omega):\R^d\to\R \quad\text{and}\quad \credr_k = \cred_k(\omega)\in\R.
\end{equation}
These variables are the source of randomness for the entire algorithm. $M_k$ is realized first during iteration $k$ and $C_k$ is realized at a later stage of this iteration.  
Formally, ProxSTORM is defined as a \emph{stochastic process} adapted to a filtration
\begin{equation}\label{eq:filt}
  \mathcal{F}_0 \subseteq \mathcal{F}_{\frac{1}{2}} \subseteq \mathcal{F}_1 \subseteq \mathcal{F}_{\frac{3}{2}} \subseteq \cdots \subseteq \mathcal{F}.
\end{equation}
Each $\mathcal{F}_k$, $k\in\mathbb{N}$, is the smallest $\sigma$-algebra generated by $\{M_\ell,\cred_\ell\}_{\ell\le k}$, which are the sources of randomness through iteration $k$.
Informally, $\mathcal{F}_k$ is the \emph{information} created through iteration $k$ \cite{durrett5}.
Each half iterate, $\mathcal{F}_{k+\frac{1}{2}}$, is the smallest $\sigma$-algebra generated by $\mathcal{F}_k$ and $M_{k+1}$ together, i.e., the information created in iteration $k+1$, up to but not including $\cred_{k+1}$.
The quantities computed by the algorithm but that do not produce additional randomness are:
\begin{multline*}
x_k=X_k(\omega),
\quad\delta_k = \Delta_k(\omega),
\quad h_k = H_k(\omega),\\
\quad s_k = S_k(\omega),
\quad\predr_k=\pred_k(\omega),
\quad\text{and}
\quad\aredr_k=\ared_k(\omega).
\end{multline*}
The random variables $X_{k+1}$ and $\Delta_{k+1}$ are adapted to $\mathcal{F}_k$; the information from iteration $k$ is enough to specify $X_{k+1}$ and $\Delta_{k+1}$.
Similarly, $H_k$, $S_k$, $\pred_k$ and $\ared_k$ are adapted to $\mathcal{F}_{k-\frac{1}{2}}$.

As is standard in probability, we take an \emph{event} to be a measurable subset of $\Omega$.
For instance, given a (user-specified) constant $\kgrad > 0$, we define the event
\begin{equation*}
  \mathcal{I}_k = \left\{\omega\in\Omega : \|\nabla M_k(X_k(\omega);\omega) - \nabla\sobj(X_k(\omega)) \|\le \kgrad \Delta_k(\omega) \right\},
\end{equation*}
which will be our notion of a sufficiently accurate model for iteration $k$ of Algorithm~\ref{alg:ps}.
We follow the convention of suppressing the dependence on $\omega$ when prescribing events, writing, e.g., simply
\begin{equation}\label{eq:ik}
  \mathcal{I}_k := \left\{\|\nabla M_k(X_k) - \nabla\sobj(X_k) \|\le \kgrad \Delta_k \right\}.
\end{equation}
Our notion of a sufficiently accurate computed reduction for iteration $k$ of Algorithm~\ref{alg:ps} is
\begin{equation}\label{eq:jk}
  \mathcal{J}_k := \left\{\left|\ared_k - \cred_k \right|\le \eta\pred_k\right\},
\end{equation}
where $\eta\in \left(1,\min\{\eta_1,1-\eta_1\}\right)$.
We will also use
\begin{equation}\label{eq:ak}
  \mathcal{S}_k := \{\text{Algorithm~\ref{alg:ps} accepts the trial step } S_k \},
\end{equation}
and other events that we define later.

We also adopt the following conventions: ($i$) an unqualified condition involving random variables is a probability one event and ($ii$) an overline represents the complement of an event, e.g., 
$$\overline{\mathcal{I}_k} := \Omega \backslash \mathcal{I}_k.$$

\section{Global Convergence and Complexity Analysis}\label{sec:an}

In Section~\ref{sec:gc}, we establish that ProxSTORM converges globally, and in Section~\ref{sec:co}, we establish its the expected complexity.
In Section \ref{sec:as}, we present assumptions about $M_k$, $\cred_k$, and $\ared_k$ on which these results depend.
The most involved step is an expected decrease result that we state and prove in Section~\ref{sec:ed}.

\subsection{Assumptions}\label{sec:as}

We begin with assumptions about models of $\sobj$, the smooth part of the objective function.
\begin{assumption}[Bounded Model Curvature]\label{a:m1}
  The trust-region subproblem models, $\{M_k\}_{k=0}^\infty$, have uniformly bounded curvature, i.e., there exists a number $\kbmh > 0$ independent of $k$ and $\omega$ so that
\begin{equation*}\label{eq:mk}
  1 + b_k\le\kbmh
\end{equation*}
for every $\omega\in\Omega$, where $b_k$ is defined in \eqref{eq:b}.
\end{assumption}
\begin{assumption}[Model Accuracy]\label{a:m2}
  For all $k\in\mathbb{N}$, the trust-region subproblem model, $M_k$, is sufficiently accurate sufficiently often, meaning
for a number $0 < \alpha < 1$ sufficiently close to one that we set later,
\begin{equation*}
  \mathbb{P}(\mathcal{I}_k|\mathcal{F}_{k-1}) \ge\alpha.
\end{equation*}
\end{assumption}
Recall that $\mathcal{I}_k$ is the event \eqref{eq:ik}, which is sufficient for the difference between the actual and predicted reduction, $A_k - P_k$, to be small.
We isolate this fact as our first lemma.
\begin{lemma}[Accurate Model Gradient $\Rightarrow$ Accurate Predicted Reduction]\label{prop:kef}
  Suppose Assumptions~\ref{a:p} and~\ref{a:m1} hold.
Then on $\mathcal{I}_k$, 
\begin{equation*}
  \aredr_k - \predr_k \le 2\kef\delta_k^2
\end{equation*}
where
\begin{equation}\label{eq:kef}
  \kef \coloneqq \tfrac{1}{4}(L + \kbmh + 2\kgrad).
\end{equation}
\end{lemma}

\begin{proof}
Since $\predr_k$ uses exact evaluations of the nonsmooth part of the objective function, $\nobj$, the difference $\aredr_k - \predr_k$ depends only on the reductions of $\sobj$ and $m_k$.
We apply the descent lemma \cite[Lemma 2.64]{bauschke.2017} to the difference between these smooth functions:
\begin{multline*}
 \left(\sobj(x_k+s_k) - m_k(x_k+s_k)\right) - \left(\sobj(x_k) - m_k(x_k)\right) \\
  \le \left( \nabla\sobj(x_k) - \nabla m_k(x_k)\right)^\top s_k + \frac{L+\kbmh}{2}\left\|s_k\right\|^2.
\end{multline*}
Here, $L+\kbmh$ is an upper bound for the Lipschitz constant for the gradient of $\sobj-m_k$.
Taking the norm of the right-hand side of this expression and applying the Cauchy-Schwarz inequality establishes the lemma since
$$\|\nabla m_k(x_k) - \nabla f(x_k)\| \le \kgrad\delta_k$$
on $\mathcal{I}_k$.
\end{proof}

Assumption~\ref{a:m1} is that the Hessians of models are uniformly bounded always, whereas Assumption~\ref{a:m2} is that $M_k$ is a probabilistic oracle.
In particular, $M_k$ must be sufficiently accurate with sufficiently high probability but otherwise can be arbitrarily inaccurate.
We formulate the computed reduction $C_k$ in a similar manner.

\begin{assumption}[Computed Reduction Accuracy]\label{a:c}
  For all $k\in\mathbb{N}$, the computed reduction, $\cred_k$, is sufficiently accurate sufficiently often, meaning
for a number $0 < \beta < 1$ sufficiently close to one that we set later,
\begin{equation*}
  \mathbb{P}(\mathcal{J}_k |\filt_{k-\frac{1}{2}})\ge\beta.
\end{equation*}
\end{assumption}

Recall that $\mathcal{J}_k$ is the event \eqref{eq:jk}.
Assumption~\ref{a:c} streamlines its STORM counterpart in a manner analogous to Assumption~\ref{a:m2}; that is, for ProxSTORM (and for STORM), it suffices to consider only the difference between $\cred_k$ and $\ared_k$ instead of estimating the objective function accurately at both $X_k$ and $X_k+S_k$.
A second aspect of Assumption~\ref{a:c} that can be applied to STORM is the quantification of accuracy in terms of the predicted reduction $\pred_k$ instead of $\Delta_k^2$.
Note that Assumptions~\ref{a:m2} and~\ref{a:c} of ProxSTORM are invariant under constant shifts of $M_k$ and of objective function estimates.
Our use of $P_k$ is consistent with conventional objective function inexactness conditions for deterministic trust-region algorithms \cite{conn.2000,carter.1989}.
In the stochastic setting of ProxSTORM, the use of $\pred_k$ has two advantages.
First, it decouples the definition of $\mathcal{J}_k$ from the $\eta_2$ parameter in the latter step acceptance condition in \eqref{eq:acceptance1}.
Second it can be less restrictive since the fraction of Cauchy decrease condition (S2) ensures
$$\predr_k \ge \kfcd\|h_k\|\min\left\{\frac{\|h_k\|}{\kbmh},\delta_k\right\}.$$
which is at least $O(\delta_k^2)$ when $\|h_k\|\ge\eta_2\delta_k$ but could be larger.
For realizations $\omega$ where $\|h_k\|<\eta_2\delta_k$, $\credr_k = \aredr_k = 0$, so $\omega\in\mathcal{J}_k$.

Our final assumption pertains to the actual reduction, $\ared_k$, approximated by the computed reduction.
\begin{assumption}\label{a:new}
For all $k\in\mathbb{N}$, there exist numbers $c_1$, $c_2\ge 0$ independent of $k$ and $\omega$ such that
$$\mathbb{E}\left[-\ared_k\mathbbm{1}(\mathcal{S}_k\cap\overline{\mathcal{J}_k})\Big\vert \filt_{k-1/2}\right] \le c_1\Delta_k^2 + c_2\|h(X_k)\|\Delta_k.$$
\end{assumption}
This assumption controls the increase of the objective function in the event of a successful step and an inaccurate computed reduction.
One can show that Assumption~\ref{a:new} holds when $\nobj\equiv0$ and follows from the corresponding STORM assumption. 
\begin{proposition}[Applicability of Assumption~\ref{a:new}]\label{prop:new}
\ \newline\vspace{-1em}
\begin{itemize}
\item
  Suppose the objective function is smooth, i.e., $\nobj\equiv 0$.
Then Assumption~\ref{a:new} holds with $c_1 = \frac{L}{2}(1-\beta)$ and $c_2 = (1-\beta)$.
\item
Assumption~\ref{a:new} is implied by
\begin{equation}\label{eq:new1}
  \mathbb{E}\left[|\cred_k - \ared_k|\Big\vert\filt_{k-1/2}\right]\le c_3\Delta_k^2,
\end{equation}
with $c_1 = c_3$ and $c_2=0$.
\end{itemize}
\end{proposition}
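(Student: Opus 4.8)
The plan is to treat the two bullet points separately, beginning with the second, which is the shorter of the two. The key observation is that on the event $\mathcal{S}_k$ the acceptance criteria \eqref{eq:acceptance1} force $\cred_k \ge \eta_1\pred_k \ge 0$, nonnegativity of $\pred_k$ being built into (S2). Hence, pointwise on $\mathcal{S}_k$ we have $-\ared_k = (\cred_k - \ared_k) - \cred_k \le |\cred_k - \ared_k|$, while on $\overline{\mathcal{S}_k}$ the indicator is zero; in every case $-\ared_k\,\mathbbm{1}(\mathcal{S}_k\cap\overline{J_k}) \le |\cred_k - \ared_k|$. Taking the conditional expectation given $\filt_{k-1/2}$ and applying \eqref{eq:new1} then gives Assumption~\ref{a:new} with $c_1 = c_3$ and $c_2 = 0$.

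For the first bullet, suppose $\nobj\equiv 0$. Then $\prox{r\nobj}$ is the identity, so $h(X_k) = \nabla\sobj(X_k)$ and $\ared_k = \sobj(X_k) - \sobj(X_k+S_k)$; moreover $\sobj$ is $L$-smooth on all of $\R^n$ by Assumption~\ref{a:p}. The descent lemma \cite[Lemma 2.64]{bauschke.2017} gives $\sobj(X_k+S_k) - \sobj(X_k) \le \nabla\sobj(X_k)^\top S_k + \tfrac{L}{2}\|S_k\|^2$, and Cauchy--Schwarz together with (S1) (i.e., $\|S_k\|\le\Delta_k$) bounds the right-hand side by $\|h(X_k)\|\Delta_k + \tfrac{L}{2}\Delta_k^2$. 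This is a nonnegative quantity that is $\filt_{k-1}$-measurable---and hence $\filt_{k-1/2}$-measurable---since $X_k$ and $\Delta_k$ are adapted to $\filt_{k-1}$ (Section~\ref{sec:st}). Nonnegativity lets me discard the factor $\mathbbm{1}(\mathcal{S}_k)$: on $\mathcal{S}_k\cap\overline{J_k}$ either $-\ared_k \ge 0$, in which case $-\ared_k\,\mathbbm{1}(\mathcal{S}_k\cap\overline{J_k}) \le (\|h(X_k)\|\Delta_k + \tfrac{L}{2}\Delta_k^2)\,\mathbbm{1}(\overline{J_k})$, or $-\ared_k < 0$, in which case the left-hand side is negative and the same inequality holds trivially.

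It then remains to take the conditional expectation given $\filt_{k-1/2}$, pull out the measurable prefactor, and invoke Assumption~\ref{a:c} in the form $\mathbb{P}(\overline{J_k}\mid\filt_{k-1/2}) = 1 - \mathbb{P}(J_k\mid\filt_{k-1/2}) \le 1-\beta$. This yields the bound $(1-\beta)\bigl(\|h(X_k)\|\Delta_k + \tfrac{L}{2}\Delta_k^2\bigr)$, i.e., Assumption~\ref{a:new} with $c_1 = \tfrac{L}{2}(1-\beta)$ and $c_2 = 1-\beta$. I expect the main (and only mildly delicate) obstacle to be the measurability bookkeeping---confirming that $\|h(X_k)\|\Delta_k + \tfrac{L}{2}\Delta_k^2$ really is $\filt_{k-1/2}$-measurable so that it may be treated as a constant under $\mathbb{E}[\,\cdot\mid\filt_{k-1/2}]$---together with the care needed to ensure the indicator-dropping step remains valid on the part of $\mathcal{S}_k\cap\overline{J_k}$ where the step actually decreases the objective.
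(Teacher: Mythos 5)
Your proposal is correct and follows essentially the same route as the paper: for the smooth case, a pointwise descent-lemma bound of $-\ared_k$ by $\|h(X_k)\|\Delta_k+\tfrac{L}{2}\Delta_k^2$ followed by conditioning on $\filt_{k-1/2}$ and using $\mathbb{P}(\overline{J_k}\mid\filt_{k-1/2})\le 1-\beta$; for the second bullet, the bound $-\ared_k\le|\cred_k-\ared_k|-\cred_k$ together with $\cred_k\ge\eta_1\pred_k\ge 0$ on $\mathcal{S}_k$ and monotonicity of the conditional expectation. The extra measurability and indicator-dropping remarks are fine but do not change the argument.
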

\begin{proof}
When $\nobj\equiv 0$, $h(X_k) =\nabla f(X_k)$, so Assumption~\ref{a:new} follows from a pointwise application of the descent lemma to $-\aredr_k$ on $\mathcal{S}_k\cap \overline{\mathcal{J}_k}$.
In particular,
\begin{align*}
\mathbb{E}\left[-A_k \mathbbm{1}(\mathcal{S}_k\cap \overline{\mathcal{J}_k}) \Big\vert \mathcal{F}_{k-1/2}\right]
&\le
\mathbb{E}\left[\left(\|\nabla f(X_k)\|\Delta_k + \frac{L}{2}\Delta_k^2\right) \mathbbm{1}(\mathcal{S}_k\cap \overline{\mathcal{J}_k}) \Big\vert \mathcal{F}_{k-1/2}\right] \\
&\le
(1-\beta)\Big(\frac{L}{2}\Delta_k^2 + \|\nabla f(X_k)\|\Delta_k\Big).
\end{align*}
To show Assumption~\ref{a:new} is implied by \eqref{eq:new1}, observe that
\begin{equation*}
  \mathbb{E}\left[-\ared_k\mathbbm{1}(\mathcal{S}_k\cap\overline{\mathcal{J}_k})\Big\vert \mathcal{F}_{k-1/2}\right]
  \le \mathbb{E}\left[\left(|\cred_k-\ared_k| -\cred_k\right)\mathbbm{1}(\mathcal{S}_k\cap\overline{\mathcal{J}_k})\Big\vert \mathcal{F}_{k-1/2}\right].
\end{equation*}
On $\mathcal{S}_k$, $\cred_k$ is positive since $\cred_k\ge \eta_1\pred_k$ is a step acceptance condition and $\pred_k > 0$ due to the fraction of Cauchy decrease condition (S2).
By the monotonicity of expectations,
\begin{align*}
\mathbb{E}\left[-\ared_k\mathbbm{1}(\mathcal{S}_k\cap\overline{\mathcal{J}_k})\Big\vert \mathcal{F}_{k-1/2}\right] \le \mathbb{E}\left[|\cred_k-\ared_k|\Big\vert \mathcal{F}_{k-1/2}\right],
\end{align*}
so \eqref{eq:new1} implies Assumption~\ref{a:new} with $c_1$ and $c_2$ as claimed.
\end{proof}

We conclude this subsection by comparing Assumption~\ref{a:c} with the inexact objective function condition used by the deterministic algorithm \cite{baraldi.2022}, which first appeared in \cite{kouri.2014}.
The deterministic condition is that, for all $k\in\mathbb{N}$, there exists a number $\kobj>0$ independent of $k$ such that
\begin{equation}\label{eq:kouri}
  |\aredr_k - \credr_k|\le \kobj\left(\eta\min\left\{\predr_k,\theta_k\right\}\right)^\mu.
\end{equation}
Here, $\mu$ and $\theta_k$ are (user-specified) nonnegative numbers with
$$\mu > 1\quad\text{and}\quad \lim_{k\to\infty} \theta_k = 0,$$
and  $\eta\in(0,\min\{\eta_1,1-\eta_1\})$ as in Assumption~\ref{a:c}.

One can consider this condition in the ProxSTORM framework by relaxing the requirement that it hold deterministically, instead allowing it to fail with a sufficiently small probability.
This condition has more parameters than the condition  \eqref{eq:jk} whose  deterministic versions is  
\begin{equation}\label{eq:standard}
  |\aredr_k - \credr_k|\le \eta\predr_k.
\end{equation}

More parameters allows for better practical flexibility. 
For instance, $\kobj$ need not be specified, which enables the use of error indicators that may depend on uncomputable constants, to bound the left-hand side of \eqref{eq:kouri}.
When $k$ is sufficiently large, however, \eqref{eq:kouri} implies \eqref{eq:standard} since $\theta_k\to 0$ \cite[Lemma~A.1]{kouri.2014}.
We note that the definition of $\mathcal{J}_k$ in Assumption~\ref{a:c} could be modified to become a probabilistic version of \eqref{eq:kouri} without meaningfully altering the convergence analysis of ProxSTORM. However, similar to our decision to forgo a more flexible trust-region update, we refrain from doing so in the interest of clarity.

\paragraph{Further Discussion of Assumptions \ref{a:m1}--\ref{a:new}}
Assumption \ref{a:m1} the standard assumption in the trust region literature that bounds the norm of model Hessians.
Assumptions \ref{a:m2}-\ref{a:new} are assumptions on the computational oracles used by our trust region algorithms: specifically Assumptions \ref{a:m2} is the assumption on the inexact gradient that the method has access to and Assumptions \ref{a:c} and \ref{a:new} are the assumptions on the computed reductions.

As discussed above, Assumptions \ref{a:m2} and \ref{a:c} are similar to those in STORM \cite{storm}.
The STORM conditions are shown in \cite{storm} to be able to be satisfied in the standard stochastic optimization setting where one has access to unbiased estimates of the gradient and the function value.
Specifically, in the case where the estimates have a bounded variance, then averaging over an appropriately large sample and applying Chebyshev's inequality (or Bernstein, in the case of sub-Gaussian noise) gives a gradient estimate that satisfies assumption Assumption \ref{a:m2} with appropriate probability.
Such a procedure is not uncommon in the literature; we provide details in Section~\ref{sec:oracles}.
Similarly, one can obtain function estimates satisfying Assumption \ref{a:c}.
In STORM and in other related papers, Assumption \ref{a:c} was only analyzed under very general assumption on the function value noise---unbiased with bounded variance.
In this case, the number of samples to ensure Assumption \ref{a:c} is $\mathcal{O}(\Delta_k^{-4})$, which makes the method seem more computationally costly than standard stochastic methods.
In Section \ref{sec:oracles}, however, we will show how in many standard settings, such as empirical risk minimization, Assumption \ref{a:c} is ensured with only $\mathcal{O}(\Delta_k^{-2})$ samples.

We also note that Assumptions \ref{a:m2} and \ref{a:c} are meant to be much more general than the standard empirical risk minimization setting as they tolerate heavy tails and fully corrupted estimates, which is also discussed in \cite{storm}.
Assumptions \ref{a:m2} was first introduced in \cite{bandeira.2014} and was inspired by constructing sparse interpolation models in derivative free setting over random interpolation sets.
The follow-up works to STORM that used the same assumptions include \cite{Bellavia2023stochastic, Bellavia2022trust,Rinaldi2024stochastic}.
In all these works it is noted that as $\Delta_k$ goes to zero the sample size needs to be increased.
The dynamics of $\Delta_k$ in the STORM (and hence the setting in this paper) is analyzed in \cite{Jin25sample}.
A high probability of the lower bound on $\Delta_k$ is derived, which enables a total sample complexity bound.

We note that in practice it is often not necessary to increase the sample size when $\Delta_k$ gets smaller.
Specifically, while $\Delta_k$ can fluctuate, it tends to decrease as the algorithm gets closer to the solution.
In many applications, the level of noise decreases closer to the solution, thus reducing the need to increase the sample size.
We observe this in our computational results.

Finally, Assumption \ref{a:new}, can be simply seen as a relaxation of \eqref{eq:new1}, which is a bound on the variance of the function value estimates.
In the analysis of STORM, this bound is not needed because Assumption \ref{a:new} holds due to the second term in the right hand side.
In the ProxSTORM case, however, it is not guaranteed, so we combine the two bounds to ensure that the new results apply to STORM as well as ProxSTORM.

\subsection{Expected Decrease}\label{sec:ed}

The goal of this subsection is to establish the expected decrease of a function defined for each iteration of ProxSTORM in terms of the objective function and the trust-region radius.
We begin with some rudimentary lemmas about the behavior of the algorithm.
The first lemma concerns the event of a small trust-region radius.
In particular, let
\begin{equation}\label{eq:zeta}
  \mathcal{B}_k := \{\|h(X_k)\|\ge \zeta\Delta_k\},\quad  \zeta \ge \kgrad + \max\left\{\eta_2,\frac{4\kef}{(1-\eta_1 - \eta)\min\{\kfcd,1\}}\right\}.
\end{equation}
The first lemma is essentially \cite[Lemma~4.6]{storm}; it is the pointwise guarantee that on $\mathcal{I}_k\cap\mathcal{B}_k$, $\aredr_k$ is at least proportional to $\|h(x_k)\|\delta_k$, where we recall that $h(x_k)$ is the true (i.e., the objective function) proximal gradient step at $x_k$.
The event $\mathcal{I}_k$ is an accurate model, in which case $\aredr_k$ and $h(x_k)$ are well-approximated respectively by $\predr_k$ and $h_k$, the proximal gradient step of the model at $x_k$.
Hence, $\mathcal{I}_k\cap\mathcal{B}_k$ implies $\predr_k$ is at least proportional to $\|h(x_k)\|\delta_k$ due to the fraction of Cauchy decrease condition (S2).
The proof of the lemma is essentially an application of the descent lemma to $\aredr_k - \predr_k$, similar to the proof of Lemma~\ref{prop:kef}.
\begin{lemma}[Large Actual Reduction] \label{lem:i}
  Suppose Assumptions~\ref{a:p} and~\ref{a:m1} hold.
Then on $\mathcal{I}_k \cap \mathcal{B}_k$,
\begin{equation*}
-\aredr_k\le -c_4\|h(x_k)\|\delta_k < 0,
\quad\text{where}\quad
c_4 \coloneqq \kfcd- \left( 2\kef + \kfcd\kgrad\right)\zeta^{-1}.
\end{equation*}
\end{lemma}
\begin{proof}
Per the proof of Lemma~\ref{prop:kef}, the following pointwise bound holds on $\mathcal{I}_k$:
\begin{equation}\label{eq:ikb1}
-\aredr_k = \predr_k - \aredr_k - \predr_k\le 2\kef\delta_k^2 - \predr_k.
\end{equation}
From the fraction of Cauchy decrease condition (S2) then,
\begin{equation*}
-\aredr_k\le 2\kef\delta_k^2 - \kfcd\|h_k\|\min\left\{\frac{\|h_k\|}{\kbmh},\delta_k\right\},
\end{equation*}
while from the definitions of $\mathcal{I}_k$ and $\mathcal{B}_k$,
$$\|h_k\|\ge \|h(x_k)\| - \|h_k-h(x_k)\| \ge (\zeta - \kgrad)\delta_k \ge 4\kef\delta_k \ge \kbmh\delta_k.$$
It follows that $\delta_k$ is the smaller of the two quantities in the minimum bounding $-\aredr_k$, so
\begin{align*}
-\aredr_k
&\le 2\kef\delta_k^2 - \kfcd\left(\|h(x_k)\| - \kgrad\delta_k\right)\delta_k \\
&\le -[\kfcd - \zeta^{-1} (2\kef+\kfcd\kgrad)]\|h(x_k)\|\delta_k.
\end{align*}
The term in square brackets is $c_4$, which is positive due to the definition of $\zeta$.
\end{proof}

The second lemma will be key to the entire analysis.
For the lemma, it suffices to relax the definition of ``an accurate model and a small trust-region radius'' from $\mathcal{I}_k\cap\mathcal{B}_k$ to $\mathcal{I}_k\cap\mathcal{R}_k$, where
$$\mathcal{R}_k := \{\|H_k\|\ge (\zeta - \kgrad)\Delta_k\}.$$
This change is indeed a relaxation since, as seen in the proof of Lemma~\ref{lem:i}, the former event implies the latter.

\begin{lemma}[Successful Step]\label{lem:step}
  Suppose Assumptions~\ref{a:p} and~\ref{a:m1} hold.
Then $\mathcal{I}_k\cap \mathcal{J}_k\cap \mathcal{R}_k$ (i.e., the event of an accurate model, an accurate computed reduction, and a small trust-region radius) is contained in $\mathcal{S}_k$ (acceptance of the trial step).
\end{lemma}

  The proof of Lemma~\ref{lem:step} relies on $\predr_k$ being at least as large as a constant times $\|h_k\|\delta_k$ on $\mathcal{I}_k\cap\mathcal{R}_k$, similar to the proof Lemma~\ref{lem:i}. Lemma~\ref{lem:step} also requires an accurate $\credr_k$ due to the first step acceptance condition in \eqref{eq:acceptance1}.

\begin{proof}
We verify the two trial step acceptance criteria in \eqref{eq:acceptance1}.
On $\mathcal{R}_k$, $\|h_k\| \ge \eta_2\delta_k$, which is one of the two trial step acceptance conditions.
The other is
$$
  \rho_k := \frac{\credr_k}{\predr_k} \ge \eta_1.
$$
To see that this condition holds, first observe that from the triangle inequality and $\predr_k>0$,
$$|\rho_k - 1|\predr_k \le |\credr_k - \aredr_k| + |\aredr_k - \predr_k|.$$
We divide this expression by $\predr_k$ and note that on $\mathcal{I}_k\cap \mathcal{J}_k$,
$$
|\aredr_k-\predr_k|\le 2\kef\delta_k^2
\quad\text{and}\quad
|\credr_k-\aredr_k|\le\eta\predr_k
$$
which yields
$$|\rho_k - 1| \le \eta + \frac{2\kef\delta_k^2}{\predr_k}.$$
Since $\predr_k$ is at least $\kfcd\|h_k\|\delta_k$ on $\mathcal{R}_k$,
$$|\rho_k - 1| \le \eta + \frac{2\kef\delta_k}{\kfcd\|h_k\|},$$
but also on $\mathcal{R}_k$, $\delta_k\le \zeta^{-1}\|h_k\| \le (1 - \eta_1 -\eta)\kfcd/(2\kef)$, so
$$|\rho_k - 1| \le \eta + (1 - \eta_1 - \eta),$$
which implies the second step acceptance condition holds.
\end{proof}

Let
$\Psi_k \coloneqq \Psi_k(X_k,\Delta_k) = \nu ( \sobj(X_k) + \nobj(X_k) ) + (1-\nu) \Delta_k^2,$
where $\nu\in(0,1)$ is a constant such that
\begin{equation}\label{eq:nu}
  \quad \frac{\nu}{1-\nu}
>
\frac{\gamma^2 - \gamma^{-2}}{(\eta_1-\eta)\min\{\kfcd,1\} }\max\left\{\frac{\kbmh}{\eta_2},1\right\}.
\end{equation}
We will use the preceding lemmas to show that the conditional expectation of $\dpsi$ is negative (i.e., $\Psi_k$ decreases on average) and that this decrease is at least as large as a constant times the trust-region radius squared.
Similar to the analysis of STORM, our strategy will be to partition the probability space $\Omega$ into events.
Thus, it is useful to first bound the change in $\Psi_k$ in some important settings.
The first of these settings is $\overline{\mathcal{S}_k}$---the event that the trial step is rejected.
It follows immediately from the definition of $\psi_k = \Psi_k(\omega)$ that on $\overline{\mathcal{S}_k}$,
\begin{equation}\label{eq:rej}
  \psi_{k+1}-\psi_k \le -c_5\delta_k^2 < 0,
\quad\text{where}\quad
    c_5 \coloneqq (1-\nu)\left(1 - \gamma^{-2}\right).
\end{equation}
The next proposition is that this bound holds on the entirety of $\mathcal{J}_k$.
\begin{proposition}[Accurate Computed Reduction $\Rightarrow$ $\Psi_k$ Decrease]\label{prop:jkb}
  Suppose Assumption~\ref{a:m1} holds.
Then \eqref{eq:rej} holds on $\mathcal{J}_k$.
\end{proposition}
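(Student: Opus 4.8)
The plan is to partition $J_k$ according to whether ProxSTORM accepts the trial step at iteration $k$. On $J_k\cap\overline{\mathcal{S}_k}$ there is nothing to prove: \eqref{eq:rej} was derived for \emph{every} $\omega\in\overline{\mathcal{S}_k}$ using only the unsuccessful update $x_{k+1}=x_k$, $\delta_{k+1}=\gamma^{-1}\delta_k$ and the definition \eqref{eq:lyap}, so it holds in particular on $J_k\cap\overline{\mathcal{S}_k}$. Hence the entire content of the proposition is to show $\dpsio\le -c_4\Delta_k(\omega)^2$ for $\omega\in J_k\cap\mathcal{S}_k$.

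Fix such an $\omega$ and argue with its realization. Because the step is accepted, the acceptance test \eqref{eq:acceptance1} gives $\credr_k\ge\eta_1\predr_k$ and $\|h_k\|\ge\eta_2\delta_k$; since $\delta_k>0$ and $\eta_2>0$ we have $\|h_k\|>0$, so (S2) yields $\predr_k>0$. Combining $\omega\in J_k$ (i.e.\ \eqref{eq:jk}) with the acceptance condition, $\aredr_k\ge\credr_k-\eta\predr_k\ge(\eta_1-\eta)\predr_k>0$ because $\eta<\eta_1$. By the definition of $\aredr_k$ and the successful update \eqref{eq:successful},
\[
  (\sobj+\nobj)(x_{k+1})-(\sobj+\nobj)(x_k)=-\aredr_k\le-(\eta_1-\eta)\predr_k,
  \qquad
  \delta_{k+1}^2=\min\{\gamma\delta_k,\delta_{\text{max}}\}^2\le\gamma^2\delta_k^2 .
\]
Inserting both estimates into \eqref{eq:lyap} gives
\[
  \dpsio\;\le\;-\nu(\eta_1-\eta)\predr_k+(1-\nu)(\gamma^2-1)\delta_k^2 .
\]

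It remains to absorb the positive $\delta_k^2$ term into the $\predr_k$ term, and this is where Assumption~\ref{a:m1} is needed. Replacing $1+b_k$ by $\kbmh$ in (S2) and using $\|h_k\|\ge\eta_2\delta_k$ gives $\min\{\|h_k\|/\kbmh,\delta_k\}\ge\min\{\eta_2/\kbmh,1\}\delta_k$, hence, after a two-case bound on the minimum, $\predr_k\ge c_5\,\delta_k^2$ with $c_5:=\kfcd\,\eta_2\min\{\eta_2/\kbmh,1\}$ --- the constant that enters (through $\nu$) in \eqref{eq:nu}. Substituting,
\[
  \dpsio\;\le\;\big[(1-\nu)(\gamma^2-1)-\nu(\eta_1-\eta)c_5\big]\delta_k^2 ,
\]
and since $(\gamma^2-1)+(1-\gamma^{-2})=\gamma^2-\gamma^{-2}$, the lower bound \eqref{eq:nu} on $\nu/(1-\nu)$ is exactly what forces the bracket to be at most $-(1-\nu)(1-\gamma^{-2})=-c_4<0$. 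This establishes \eqref{eq:rej} on $J_k\cap\mathcal{S}_k$ and finishes the proof.

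I do not expect a genuine obstacle. The only points that require attention are: (i) noticing that Assumption~\ref{a:m1} enters \emph{only} to convert (S2) into a uniform bound $\predr_k\ge c_5\delta_k^2$ on accepted steps --- the model-accuracy event $I_k$ plays no role, which is why this proposition assumes Assumption~\ref{a:m1} alone; (ii) the harmless $\delta_{\text{max}}$ truncation, since $\min\{\gamma\delta_k,\delta_{\text{max}}\}\le\gamma\delta_k$ in any case; and (iii) bookkeeping the constants so that the calibration \eqref{eq:nu} of $\nu$ lands precisely on the target coefficient $-c_4$ coming from the unsuccessful-step bound \eqref{eq:rej}.
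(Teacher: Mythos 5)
Your proof follows the same route as the paper's: the same split of $J_k$ into $J_k\cap\overline{\mathcal{S}_k}$ (covered by \eqref{eq:rej}) and $J_k\cap\mathcal{S}_k$, the same chain $\aredr_k\ge\credr_k-\eta\predr_k\ge(\eta_1-\eta)\predr_k>0$ from \eqref{eq:jk} and \eqref{eq:acceptance1}, the same conversion of (S2) plus Assumption~\ref{a:m1} plus $\|h_k\|\ge\eta_2\delta_k$ into $\predr_k\gtrsim\delta_k^2$, and the same final appeal to \eqref{eq:nu}. The one substantive discrepancy is the constant in that conversion. You (correctly) obtain $\predr_k\ge\kfcd\,\eta_2\min\{\eta_2/\kbmh,1\}\,\delta_k^2$, but then assert that \eqref{eq:nu} ``exactly'' forces the bracket below $-c_4$; taken literally, \eqref{eq:nu} has denominator $(\eta_1-\eta)\kfcd\min\{\eta_2/\kbmh,1\}$ \emph{without} the extra $\eta_2$, so your calibration step only closes when $\eta_2\ge 1$ (the numerical experiments use $\eta_2=5\times 10^{-5}$). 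The paper's own proof instead writes $-\aredr_k\le-(\eta_1-\eta)\kfcd\min\{\eta_2/\kbmh,1\}\delta_k^2$, which matches \eqref{eq:nu} as printed but, for $\eta_2<1$, overstates what (S2) with $\|H_k\|\ge\eta_2\Delta_k$ actually yields---so the missing factor $\eta_2$ has to be accounted for somewhere, most naturally by reading the denominator of \eqref{eq:nu} as $(\eta_1-\eta)\kfcd\,\eta_2\min\{\eta_2/\kbmh,1\}$, after which your argument (and the paper's) closes verbatim. In other words, your attempt is essentially the paper's proof with the honest constant, and the only ``gap'' is that the stated \eqref{eq:nu} and your $\predr_k$ bound do not quite mesh; this is a calibration/typo-level issue rather than a flaw in the argument's structure. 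Two cosmetic points: your name $c_5$ collides with the paper's $c_5$ from Proposition~\ref{prop:ikb}, and the $\delta_{\max}$ truncation remark, while correct, is not needed since $\min\{\gamma\delta_k,\delta_{\max}\}\le\gamma\delta_k$ is exactly how the paper uses it too.
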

\begin{proof}
The bound \eqref{eq:rej} holds on $\overline{\mathcal{S}_k}\supseteq \mathcal{J}_k\cap\overline{\mathcal{S}_k}$.
To see that it holds on $\mathcal{J}_k\cap \mathcal{S}_k$, note that on $\mathcal{S}_k$,
$$-\aredr_k
\le |\credr_k-\aredr_k|-\credr_k
\le |\credr_k-\aredr_k|-\eta_1\predr_k,
$$
where the second inequality is a consequence of the step acceptance condition $\credr_k\ge\eta_1\predr_k$.
Combining the other step acceptance condition, $\|h_k\|\ge\eta_2\delta_k$, with
$$|\credr_k-\aredr_k|\le\eta\predr_k,$$
which holds on $\mathcal{J}_k$, we thus have from the fraction of Cauchy decrease condition (S2),
\begin{equation*}
-\aredr_k
\le -(\eta_1-\eta)\predr_k \le -(\eta_1-\eta)\kfcd\min\left\{\frac{\eta_2}{\kbmh},1\right\}\delta_k^2 < 0.
\end{equation*}
Consequently,
\begin{equation*}
\psi_{k+1}-\psi_k
\le \left[-\nu(\eta_1-\eta)\kfcd\min\left\{\frac{\eta_2}{\kbmh},1\right\} + (1-\nu)(\gamma^2-1)\right]\delta_k^2,
\end{equation*}
so by the definition of $\nu$---i.e., \eqref{eq:nu}---the bound \eqref{eq:rej} holds on $\mathcal{J}_k\cap\mathcal{S}_k$.
\end{proof}

We also bound the change in $\Psi_k$ pointwise on the other ``good'' event $\mathcal{I}_k$.
The bound is positive, and hence weaker than \eqref{eq:rej}.
\begin{proposition}[Accurate Model $\Rightarrow$ Bounded $\Psi_k$ Increase]\label{prop:ikb}
  Suppose Assumptions~\ref{a:p} and~\ref{a:m1} hold.
Then on $\mathcal{I}_k$,
\begin{equation}\label{eq:ikb}
\psi_{k+1}-\psi_k \le c_6\delta_k^2,
\quad\text{where}\quad
c_6 \coloneqq 2\nu\kef + (1-\nu)(\gamma^2 -1).
\end{equation}
\end{proposition}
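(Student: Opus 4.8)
The plan is to partition $I_k = (I_k\cap\overline{\mathcal{S}_k})\cup(I_k\cap\mathcal{S}_k)$ and dispose of each piece. On $I_k\cap\overline{\mathcal{S}_k}\subseteq\overline{\mathcal{S}_k}$ there is nothing new to do: \eqref{eq:rej} already gives $\dpsio\le -c_4\Delta_k(\omega)^2<0$, and $c_5\ge 0$ (since $\kef>0$, $\nu\in(0,1)$, $\gamma>1$), so \eqref{eq:ikb} holds a fortiori. Hence all the work is on $I_k\cap\mathcal{S}_k$.

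Fix $\omega\in I_k\cap\mathcal{S}_k$. Because the step is accepted, \eqref{eq:successful} gives $X_{k+1}(\omega)=X_k(\omega)+S_k(\omega)$ and $\Delta_{k+1}(\omega)\le\gamma\Delta_k(\omega)$. Substituting these into the definition \eqref{eq:lyap} of $\Psi$ and recognizing $(\sobj+\nobj)(X_k(\omega))-(\sobj+\nobj)(X_k(\omega)+S_k(\omega))=\ared_k(\omega)$, I would get
\begin{equation*}
  \dpsio \le -\nu\,\ared_k(\omega) + (1-\nu)(\gamma^2-1)\Delta_k(\omega)^2 ,
\end{equation*}
so that the whole proposition reduces to the pointwise estimate $-\ared_k(\omega)\le 2\kef\Delta_k(\omega)^2$: inserting it produces exactly $c_5\Delta_k(\omega)^2$.

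To get that estimate, subtract the definition of $\pred_k(\omega)$ from that of $\ared_k(\omega)$ --- the two $\nobj$ terms cancel --- to obtain
\begin{equation*}
  \ared_k(\omega) - \pred_k(\omega) = -\Big[\big(\sobj-m_k\big)\big(X_k(\omega)+S_k(\omega)\big) - \big(\sobj-m_k\big)\big(X_k(\omega)\big)\Big].
\end{equation*}
Since $\|S_k(\omega)\|\le\Delta_k(\omega)$ by (S1) and $\omega\in I_k$, Lemma~\ref{prop:kef}, which holds under the hypotheses of the present proposition, bounds the bracket from above by $2\kef\Delta_k(\omega)^2$; hence $\ared_k(\omega)\ge\pred_k(\omega)-2\kef\Delta_k(\omega)^2$. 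Finally (S2) forces $\pred_k(\omega)\ge 0$, so $-\ared_k(\omega)\le -\pred_k(\omega)+2\kef\Delta_k(\omega)^2\le 2\kef\Delta_k(\omega)^2$, as needed.

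The computation is essentially bookkeeping, and the one place that warrants attention is the direction of the descent-lemma bound. Lemma~\ref{prop:kef} controls the model error $(\sobj-m_k)(X_k+S_k)-(\sobj-m_k)(X_k)$ from \emph{above}; it is the sign flip in the $\ared_k-\pred_k$ identity that converts this into an upper bound on $-\ared_k(\omega)$ (rather than on $\ared_k(\omega)$), and the nonnegativity of $\pred_k(\omega)$ then lets me drop the $-\pred_k(\omega)$ term harmlessly. I do not expect any obstacle beyond carefully tracking those signs and the factor $2$ carried by Lemma~\ref{prop:kef}.
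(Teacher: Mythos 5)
Your proof is correct and takes essentially the same route as the paper: split $I_k$ into its intersections with $\mathcal{S}_k$ and $\overline{\mathcal{S}_k}$, use the bound \eqref{eq:rej} on the unsuccessful part, and on the successful part combine the identity $\ared_k-\pred_k = -\big[(\sobj-m_k)(X_k+S_k)-(\sobj-m_k)(X_k)\big]$ with the descent-lemma estimate of Lemma~\ref{prop:kef} and the nonnegativity of $\pred_k$ to get $-\ared_k\le 2\kef\Delta_k^2$. The paper re-derives the Lemma~\ref{prop:kef} estimate inline (with an absolute value) rather than citing it, but the argument is the same.
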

\begin{proof}
The proposition follows from \eqref{eq:ikb1}: the $-\predr_k$ term there is negative, so dropping it yields the desired result.
\end{proof}

Lemma~\ref{lem:i} can be used to tighten this positive bound on $\mathcal{I}_k\cap\mathcal{B}_k\cap\mathcal{S}_k$ into guaranteed decrease.

\begin{corollary}[Guanteed $\Psi_k$ Decrease]\label{cor:bkc}
 Suppose the assumptions of Lemma~\ref{lem:i} hold.
Then on $\mathcal{I}_k\cap\mathcal{B}_k\cap\mathcal{S}_k$,
$$\psi_{k+1}-\psi_k \le -\nu c_4\|h(x_k)\|\delta_k + (1-\nu)(\gamma^2 - 1)\delta_k^2 < 0.$$
\end{corollary}
\begin{proof}
The first inequality is a consequence of Lemma~\ref{lem:i} and the definition of $\psi_k$.
To show that the bound is negative, we establish
\begin{equation}\label{eq:c7zeta}
-\frac{1}{2}\nu c_4\zeta + (1-\nu)(\gamma^2-1) < 0,
\end{equation}
which is an identity we use later as well.
This identity is sufficient since on $\mathcal{B}_k$,
$$
-\nu c_4\|h(x_k)\|\delta_k + (1-\nu)(\gamma^2 - 1)\delta_k^2
\le
[-\nu c_4+(1-\nu)(\gamma^2-1)\zeta^{-1}] \|h(x_k)\|\delta_k,
$$
which is bounded above by \eqref{eq:c7zeta}.
We note that 1/4 is bounded below by the function $a(1-a)$ defined for $a\in\R$; thus, from the definitions of $c_4$, $\zeta$, and $\kef$,
\begin{multline*}
\frac{1}{2}c_4\zeta = \frac{1}{2}\kfcd\left(\zeta-\kgrad-\frac{2\kef}{\kfcd}\right)
\ge \frac{1/4}{1-\eta_1-\eta}\\
\ge \frac{\eta_1(1-\eta_1) - \eta(1-\eta)}{1-\eta_1-\eta}
= \frac{(\eta_1-\eta)(1-\eta_1-\eta)}{1-\eta_1-\eta}
= (\eta_1-\eta).
\end{multline*}
This identity together with the definition of $\nu$ establishes \eqref{eq:c7zeta}.
\end{proof}

Our final preliminary result concerns the change in $\Psi_k$ on $\overline{\mathcal{I}_k\cup\mathcal{J}_k} = \overline{\mathcal{I}_k}\cap\overline{\mathcal{J}_k}$, i.e., the ``worst case'' that neither $\mathcal{I}_k$ nor $\mathcal{J}_k$ occur.
The result, which is primarily a consequence of Assumption~\ref{a:new}, is in a sense weaker than the results that precede as it is in expectation, as opposed to pointwise.

\begin{proposition}[Inaccurate Model and Inaccurate Computed Reduction $\Rightarrow$ Expected $\Psi$ Increase]\label{prop:newb}
  Suppose Assumptions~\ref{a:m2}--\ref{a:new} hold.
Then
\begin{multline*}
  \mathbb{E}\left[(\dpsi)\mathbbm{1}(\overline{\mathcal{I}_k}\cap\overline{\mathcal{J}_k})\Big\vert \filt_{k-1}\right] \\
\le (1-\alpha)\Big(\nu c_2\|h(X_k)\|\Delta_k  + \left[\nu c_1 + (1-\beta)(1-\nu)(\gamma^2 - 1)\right]\Delta_k^2\Big).
\end{multline*}
\end{proposition}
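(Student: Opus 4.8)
The plan is to split the event $\overline{I_k}\cap\overline{J_k}$ into its successful part $\mathcal{S}_k\cap\overline{I_k}\cap\overline{J_k}$ and its unsuccessful part $\overline{\mathcal{S}_k}\cap\overline{I_k}\cap\overline{J_k}$, and to bound the contribution of $\dpsi$ on each. On the unsuccessful part, $X_{k+1}=X_k$ and $\Delta_{k+1}=\gamma^{-1}\Delta_k$, so \eqref{eq:rej} gives $\dpsio\le -c_4\Delta_k(\omega)^2\le 0$; this part therefore contributes a nonpositive amount to the conditional expectation and can simply be dropped. On the successful part, the update \eqref{eq:successful} gives $X_{k+1}=X_k+S_k$ and $\Delta_{k+1}\le\gamma\Delta_k$, so the definition \eqref{eq:lyap} of the Lyapunov function yields $\dpsi\le-\nu\ared_k+(1-\nu)(\gamma^2-1)\Delta_k^2$ on $\mathcal{S}_k$. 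Multiplying by $\mathbbm{1}(\mathcal{S}_k\cap\overline{I_k}\cap\overline{J_k})$ and taking $\mathbb{E}[\,\cdot\mid\filt_{k-1}]$ then reduces the task to bounding the two terms
\[
\nu\,\mathbb{E}\big[{-}\ared_k\,\mathbbm{1}(\mathcal{S}_k\cap\overline{I_k}\cap\overline{J_k})\mid\filt_{k-1}\big]
\quad\text{and}\quad
(1-\nu)(\gamma^2-1)\,\mathbb{E}\big[\Delta_k^2\,\mathbbm{1}(\mathcal{S}_k\cap\overline{I_k}\cap\overline{J_k})\mid\filt_{k-1}\big].
\]

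The key observation for both terms is that $I_k\in\filt_{k-\frac{1}{2}}$, because $M_k$ is adapted to $\filt_{k-\frac{1}{2}}$ while $X_k$ and $\Delta_k$ are adapted to $\filt_{k-1}$; this lets $\mathbbm{1}(\overline{I_k})$ be pulled out of a conditional expectation given $\filt_{k-\frac{1}{2}}$. For the first term I would condition on $\filt_{k-\frac{1}{2}}$ first and use $\mathcal{S}_k\cap\overline{I_k}\cap\overline{J_k}=\overline{I_k}\cap(\mathcal{S}_k\cap\overline{J_k})$ to get
\[
\mathbb{E}\big[{-}\ared_k\,\mathbbm{1}(\mathcal{S}_k\cap\overline{I_k}\cap\overline{J_k})\mid\filt_{k-\frac{1}{2}}\big]
=\mathbbm{1}(\overline{I_k})\,\mathbb{E}\big[{-}\ared_k\,\mathbbm{1}(\mathcal{S}_k\cap\overline{J_k})\mid\filt_{k-\frac{1}{2}}\big]
\le\mathbbm{1}(\overline{I_k})\big(c_1\Delta_k^2+c_2\|h(X_k)\|\Delta_k\big)
\]
by Assumption~\ref{a:new}; the inequality survives multiplication by $\mathbbm{1}(\overline{I_k})\ge0$ precisely because its right-hand side is nonnegative ($c_1,c_2\ge0$). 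Taking $\mathbb{E}[\,\cdot\mid\filt_{k-1}]$, using that $\Delta_k$ and $h(X_k)$ are $\filt_{k-1}$-measurable, and using $\mathbb{P}(\overline{I_k}\mid\filt_{k-1})\le1-\alpha$ from Assumption~\ref{a:m2}, this term is at most $\nu(1-\alpha)\big(c_1\Delta_k^2+c_2\|h(X_k)\|\Delta_k\big)$, which supplies the $\nu c_2\|h(X_k)\|\Delta_k$ contribution and the $\nu c_1\Delta_k^2$ part of the $\Delta_k^2$ coefficient in the claim.

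For the second term, $\Delta_k^2$ is $\filt_{k-1}$-measurable, so it equals $(1-\nu)(\gamma^2-1)\Delta_k^2\,\mathbb{P}(\mathcal{S}_k\cap\overline{I_k}\cap\overline{J_k}\mid\filt_{k-1})\le(1-\nu)(\gamma^2-1)\Delta_k^2\,\mathbb{P}(\overline{I_k}\cap\overline{J_k}\mid\filt_{k-1})$, and the same nested-conditioning step gives $\mathbb{P}(\overline{I_k}\cap\overline{J_k}\mid\filt_{k-1})=\mathbb{E}\big[\mathbbm{1}(\overline{I_k})\,\mathbb{P}(\overline{J_k}\mid\filt_{k-\frac{1}{2}})\mid\filt_{k-1}\big]\le(1-\beta)\,\mathbb{P}(\overline{I_k}\mid\filt_{k-1})\le(1-\alpha)(1-\beta)$ via Assumptions~\ref{a:c} and~\ref{a:m2}. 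Adding the two bounds together with the discarded nonpositive unsuccessful-step contribution gives the asserted inequality. The work here is essentially bookkeeping rather than anything deep; the only point requiring care is that $-\ared_k$ is not sign-definite, so the nested-conditioning manipulations are legitimate only because $\overline{I_k}\in\filt_{k-\frac{1}{2}}$, the bounds supplied by Assumptions~\ref{a:m2}--\ref{a:new} are nonnegative, and the unsuccessful-step term is automatically nonpositive and hence harmless to drop.
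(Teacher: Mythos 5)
Your proof is correct and follows essentially the same route as the paper's: a pointwise bound on $\dpsi$ from \eqref{eq:lyap} (successful vs.\ unsuccessful steps), followed by tower-property conditioning on $\filt_{k-\frac{1}{2}}$ using that $I_k$ is $\filt_{k-\frac{1}{2}}$-measurable, then Assumptions~\ref{a:m2},~\ref{a:c}, and~\ref{a:new}. The only cosmetic difference is that you drop the nonpositive unsuccessful-step contribution via \eqref{eq:rej} while the paper absorbs it into the cruder $(1-\nu)(\gamma^2-1)\Delta_k^2$ term over all of $\overline{I_k}\cap\overline{J_k}$; both yield the identical final bound.
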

\begin{remark}
The quantity being bounded resembles an average (also called conditional) value at risk \cite{rockafellar.2000}.
The quantity is the conditional expectation of $\Psi_{k+1} - \Psi_k$ on $\overline{\mathcal{I}_k}\cap\overline{\mathcal{J}_k}$, which is a region in probability space that belongs to the tails of both $\|\nabla M_k(X_k) - \nabla f(X_k)\|$ and $|\cred_k -\ared_k|$.
\end{remark}
\begin{proof}
  From the definition of $\Psi_k$,
\begin{multline}\label{eq:wc}
  \mathbb{E}\left[(\dpsi)\mathbbm{1}(\overline{\mathcal{I}_k}\cap\overline{\mathcal{J}_k})\Big\vert \filt_{k-1}\right] \\
  \le \nu \mathbb{E}\left[-\ared_k\mathbbm{1}(\mathcal{S}_k\cap\overline{\mathcal{I}_k}\cap\overline{\mathcal{J}_k})\Big\vert \filt_{k-1}\right]
  + (1-\nu)(\gamma^2-1)\Delta_k^2\mathbb{E}\left[\mathbbm{1}(\overline{\mathcal{I}_k}\cap\overline{\mathcal{J}_k})\Big\vert \filt_{k-1}\right],
\end{multline}
wich uses $\Delta_k\in \mathcal{F}_{k-1}$
The tower property simplifies the right-hand side of \eqref{eq:wc}.
In particular,
\begin{equation*}
\mathbb{E}\left[\mathbbm{1}(\overline{\mathcal{I}_k}\cap\overline{\mathcal{J}_k})\Big\vert \filt_{k-1}\right]
=\mathbb{E}\left[\mathbbm{1}(\overline{\mathcal{I}_k})\mathbb{E}\left[\mathbbm{1}(\overline{\mathcal{J}_k})\Big\vert \filt_{k-1/2}\right]\Big\vert \filt_{k-1}\right]
\le (1-\alpha)(1-\beta).
\end{equation*}
and
\begin{align*}
\mathbb{E}\left[-\ared_k\mathbbm{1}(\mathcal{S}_k\cap\overline{\mathcal{I}_k}\cap\overline{\mathcal{J}_k})\Big\vert \filt_{k-1}\right]
&=\mathbb{E}\left[\mathbbm{1}(\overline{\mathcal{I}_k})\mathbb{E}\left[-\ared_k\mathbbm{1}(\mathcal{S}_k\cap\overline{\mathcal{J}_k})\Big\vert \filt_{k-1/2}\right]\Big\vert \filt_{k-1}\right] \\
&\le (1-\alpha)\left(c_1\Delta_k^2  + c_2\|h(X_k)\|\Delta_k\right),
\end{align*}
where the inequality is implied by Assumption~\ref{a:new}.
\end{proof}

The preceding results position us to state and prove the main result of this subsection.
\begin{lemma}[Expected $\Psi$ Decrease]\label{lem:incs}
For any $\Theta$ satisfying
\begin{equation}\label{eq:tbounds}
  \frac{1}{2}(1-\nu)(1-\gamma^{-2})\le \Theta <(1-\nu)(1-\gamma^{-2}),
\end{equation}
there exist
$0 \le \alpha,\beta < 1$ 
such that if Assumptions~\ref{a:p}--\ref{a:new} hold with these $\alpha$ and $\beta$,
then for $k\in\mathbb{N}$, 
\begin{equation}\label{eq:incs}
  \mathbb{E}\left[\Psi_{k + 1} - \Psi_k\Big|\filt_{k-1}\right]\le -\Theta\Delta_k^2.
\end{equation}
\end{lemma}

\begin{proof}
Similar to the proof of the analogous result in STORM, we use the event $\mathcal{B}_k$ and its complement, $\overline{\mathcal{B}_k}$, to partition $\Omega$.
We write 
\begin{equation}\label{eq:12}
\mathbb{E}\left[\dpsi \Big\vert\filt_{k-1}\right]
= \underbrace{\mathbb{E}\left[(\dpsi)\mathbbm{1}(\mathcal{B}_k) \Big\vert\filt_{k-1}\right]}_{\normalfont\Circled{1}}
+ \underbrace{\mathbb{E}\left[(\dpsi)\mathbbm{1}(\overline{\mathcal{B}_k}) \Big\vert\filt_{k-1}\right]}_{\normalfont\Circled{2}},
\end{equation}
and apply the results prior to this lemma to bound each term.
Term \Circled{2} is the easier one.
We partition $\overline{\mathcal{B}_k}$ into the disjoint sub-events
\begin{equation}\label{eq:sets}
\overline{\mathcal{B}_k}\cap \mathcal{J}_k, \quad \overline{\mathcal{B}_k}\cap\mathcal{I}_k\cap \overline{\mathcal{J}_k},\quad\text{and}\quad \overline{\mathcal{B}_k}\cap\overline{\mathcal{I}_k}\cap\overline{\mathcal{J}_k},
\end{equation}
on which we apply Propositions~\ref{prop:jkb},~\ref{prop:ikb}, and~\ref{prop:newb}, respectively.
In particular, for $\overline{\mathcal{B}_k}\cap\overline{\mathcal{J}_k}\cap\overline{\mathcal{I}_k}$,
\begin{multline*}
\mathbb{E}\left[(\dpsi)\mathbbm{1}(\overline{\mathcal{B}_k}\cap\overline{\mathcal{I}_k}\cap\overline{\mathcal{J}_k})\Big\vert\filt_{k-1}\right]
=   \mathbbm{1}(\overline{\mathcal{B}_k}) \mathbb{E}\left[(\dpsi)\mathbbm{1}(\overline{\mathcal{I}_k}\cap\overline{\mathcal{J}_k})\Big\vert\filt_{k-1}\right]\\
\le \mathbbm{1}(\overline{\mathcal{B}_k})(1-\alpha)\Big(\nu c_2\|h(X_k)\|\Delta_k  + [\nu c_1 + (1-\beta)(1-\nu)(\gamma^2 - 1)]\Delta_k^2\Big).
\end{multline*}
Recall that $\|h(x_k)\| < \zeta\delta_k$ on $\overline{\mathcal{B}_k}$, so this right-hand side is bounded above by
\begin{equation*}
\mathbbm{1}(\overline{\mathcal{B}_k})(1-\alpha)
[\nu(c_1 + c_2\zeta) + (1-\beta)(1-\nu)(\gamma^2 - 1)]\Delta_k^2.
\end{equation*}
Combining this bound with Propositions~\ref{prop:jkb} and~\ref{prop:ikb} applied to the other sets in \eqref{eq:sets} results in
\begin{equation}\label{eq:bkc}
\Circled{2}
\le
\mathbbm{1}(\overline{\mathcal{B}_k})
\Big(- \beta c_5
      + (1-\beta) c_6
      + (1-\alpha)[\nu(c_1 + c_2\zeta) + (1-\beta)(1-\nu)(\gamma^2 - 1)]
\Big)\Delta_k^2,
\end{equation}
which can be made negative by choosing $\alpha$ and $\beta$ sufficiently close to one.
Now we bound \Circled{1}.
A bound proportional to $\Delta_k^2$ is challenging for \Circled{1} because in the ``worst case'' $\mathcal{B}_k\cap\overline{\mathcal{I}_k}\cap\overline{\mathcal{J}_k}$,
Proposition~\ref{prop:newb} gives
\begin{multline}\label{eq:bk}
\mathbb{E}\Big[(\Psi_{k+1}-\Psi_k)\mathbbm{1}(\mathcal{B}_k\cap\overline{\mathcal{I}_k}\cap\overline{\mathcal{J}_k})\Big\vert\filt_{k-1}\Big] \\
\le \mathbbm{1}(\mathcal{B}_k)(1-\alpha)\left(\nu\left[\frac{c_1}{\zeta} + c_2\right]\|h(X_k)\|\Delta_k  + [(1-\beta)(1-\nu)(\gamma^2 - 1)]\Delta_k^2\right).
\end{multline}
This bound has a positive $\|h(X_k)\|\Delta_k$ term, and on $\mathcal{B}_k$, $\|h(x_k)\|\delta_k$ is large relative to $\delta_k^2$.
Our strategy is to offset the term with a negative $\|h(X_k)\|\Delta_k$ term from the ``best case'' $\mathcal{B}_k\cap\mathcal{I}_k\cap\mathcal{J}_k$.
We have from Lemma~\ref{lem:step}, that $\mathcal{B}_k\cap\mathcal{I}_k\cap\mathcal{J}_k\subseteq \mathcal{B}_k\cap\mathcal{I}_k\cap\mathcal{S}_k$, so from Corollary~\ref{cor:bkc},
\begin{multline*}
\mathbb{E}\left[(\dpsi)\mathbbm{1}(\mathcal{B}_k\cap\mathcal{I}_k\cap \mathcal{J}_k) \Big\vert\filt_{k-1}\right] \\
\le
\mathbbm{1}(\mathcal{B}_k)\alpha\beta\left(-\nu c_4\|h(X_k)\|\Delta_k + (1-\nu)(\gamma^2 - 1)\Delta_k^2\right).
\end{multline*}
Combining this result with \eqref{eq:bk} yields
\begin{align}\label{eq:bk2}
&\mathbb{E}\left[(\dpsi)[\mathbbm{1}(\mathcal{B}_k\cap\mathcal{I}_k\cap \mathcal{J}_k) +\mathbbm{1}(\mathcal{B}_k\cap\overline{\mathcal{I}_k}\cap\overline{\mathcal{J}_k})] \Big\vert\filt_{k-1}\right]
\notag\\
&\le
\mathbbm{1}(\mathcal{B}_k)\left(
-\nu\left[\alpha\beta c_4 - (1-\alpha)\left(\frac{c_1}{\zeta} + c_2\right) \right]\|h(X_k)\|\Delta_k + (1-\nu)(\gamma^2 - 1)\Delta_k^2
\right),
\end{align}
where we have simplified the coefficient of the $\Delta_k^2$ term by noting that since $\alpha$ and $\beta$ lower bound probabilities,
$$(1 - \alpha)(1-\beta) + \alpha\beta = 1 + \alpha(\beta - 1) + \beta(\alpha - 1)\le 1.$$
What remains of $\mathcal{B}_k$ outside of $(\mathcal{I}_k\cap\mathcal{J}_k)\cup (\overline{\mathcal{I}_k}\cap\overline{\mathcal{J}_k})$ belongs to $\mathcal{I}_k\cap\overline{\mathcal{J}_k}$ or $\overline{\mathcal{I}_k}\cap\mathcal{J}_k$.
On both of these events, $\psi_{k+1}-\psi_k\le 0$, due to \eqref{eq:rej} and Corollary~\ref{cor:bkc} in the case of the former and Proposition~\ref{prop:jkb} in the case of the latter.
We thus conclude that the right-hand side of \eqref{eq:bk2} bounds \Circled{1}.

To complete the proof of Lemma~\ref{lem:incs}, we must show that our bounds for \Circled{1} and \Circled{2} sum to $\Theta\Delta_k^2$ for a $\Theta$ that satisfies \eqref{eq:tbounds}.
We accomplish this by choosing the probability lower bounds $\alpha$ and $\beta$ for accurate models and accurate computed reductions, respectively, to be sufficiently close to one.
As a convenience to the reader, Table~\ref{tbl:c} recapitulates the constants pertinent to the analysis up to this point. 
We omit $c_3$ since it does not enter the analysis directly.
First, we choose $\alpha$ and $\beta$ sufficiently close to one that
\begin{equation}\label{eq:p1}
  \frac{\alpha\beta-\frac{1}{2}}{1-\alpha} > \frac{c_1+c_2\zeta}{c_4\zeta}.
\end{equation}
This is possible even in the smooth case where $c_1,c_2\propto (1-\beta)$.
We claim that this choice of $\alpha$ and $\beta$ implies
\begin{equation}\label{eq:bk4}
  \Circled{1}\le-\mathbbm{1}(\mathcal{B}_k) (1-\nu)(1-\gamma^{-2})\Delta_k^2.
\end{equation}
To explain why, we rearrange \eqref{eq:p1} into
\begin{equation*}
\frac{1}{2}c_4 < \alpha\beta c_4 - (1-\alpha)\left(\frac{c_1}{\zeta}+c_2\right)
\end{equation*}
and recall that the right-hand side of \eqref{eq:bk2} bounds \Circled{1}. 
As a result,
\begin{equation*}
\Circled{1}
\le
\mathbbm{1}(\mathcal{B}_k)\left(
-\frac{1}{2}\nu c_4\zeta + (1-\nu)(\gamma^2 - 1),
\right)\Delta_k^2.
\end{equation*}
which implies \eqref{eq:bk4} when combining \eqref{eq:c7zeta} with the definition of $\nu$ and a bit of algebra.
Second, we choose $\alpha$ and $\beta$ sufficiently close to one that 
\begin{equation}\label{eq:p2}
  \beta(1-\nu)(1-\gamma^{-2}) - \Theta 
  \ge (1-\beta)c_6
+ (1-\alpha)[\nu(c_1 + c_2\zeta) + (1-\beta)(1-\nu)(\gamma^2 - 1)].
\end{equation}
This is always possible since in the limit of $\alpha,\beta\to 1$, \eqref{eq:p2} is $(1-\nu)(1-\gamma^{-2}) -\Theta \ge 0$, which is strict by assumption.
It then follows from \eqref{eq:bkc} that
\begin{equation}\label{eq:bkc2}
\Circled{2} \le -\mathbbm{1}(\overline{\mathcal{B}_k})\Theta\Delta_k^2.
\end{equation}
From \eqref{eq:bk4} and \eqref{eq:bkc2}, we conclude
\begin{equation*}
  \mathbb{E}\Big[\dpsi \Big\vert\filt_{k-1}\Big] = \Circled{1} + \Circled{2} \le -\Theta\Delta_k^2,
\end{equation*}
which proves Lemma~\ref{lem:incs}.
\end{proof}

\begin{table}[h!]
\caption{The constants $c_k$ in the proof of Lemma~\ref{lem:incs}.
}\label{tbl:c}
\begin{tabular}{@{}p{\textwidth}@{}}
\centering
\begin{tabular}{c|c}\hline
\multirow{2}{*}{$c_1$} & \parbox{7.5cm}{\raggedright $\Delta_k^2$ coefficient in Assumption~\ref{a:new}; } \\
                       & \parbox{7.5cm}{\raggedleft  $\quad$ can be set to $\frac{L}{2}(1-\beta)$ when $\nobj\equiv 0$ (see Proposition~\ref{prop:new})}
\\[0.5em]
\hline
\\[-0.5em]
\multirow{2}{*}{$c_2$} & \parbox{7.5cm}{\raggedright $\|h(X_k)\|\Delta_k$ coefficient in Assumption~\ref{a:new}; } \\
                       & \parbox{7.5cm}{\raggedleft $\quad$ can be set to $(1-\beta)$ when $\nobj\equiv 0$ (see Proposition~\ref{prop:new})}
\\[0.5em]
\hline
\\[-0.5em]
$c_5$ &  {\centering $(1-\nu)\left(1 - \gamma^{-2}\right)$}
\\[0.5em]
\hline
\\[-0.5em]
$c_6$ & $2\nu\kef + (1-\nu)(\gamma^2 -1)$
\\[0.5em]
\hline
\\[-0.5em]
$c_4$ & $\kfcd- \left( 2\kef + \kfcd\kgrad\right)\zeta^{-1}$
\end{tabular}
\end{tabular}
\end{table}

Having proved Lemma~\ref{lem:incs}, we see that the following assumption plays a key role in the analysis.
\begin{assumption}\label{a:ab}
  The probability lower bounds $\alpha$ and $\beta$ that appear in Assumptions~\ref{a:m2} and~\ref{a:c} satisfy \eqref{eq:p1} and \eqref{eq:p2}.
\end{assumption}
Remarks concerning the different quantities in our analysis are warranted.
\begin{remark}[Deterministic Computed Reduction]
The limit $\beta\to 1$ corresponds to the computed reduction being accurate with probability one.
That is, $P(\overline{\mathcal{J}_k}) = 0$ for all $k$, in which case we can take $c_1=c_2=0$ in Assumption~\ref{a:new}.
Condition \eqref{eq:p2} is then satisfied with $\Theta = (1-\nu)(1-\gamma^{-2})$ and \eqref{eq:p1} reduces to $\alpha\ge 1/2$, i.e., to models being more accurate more often than not.
\end{remark}

\begin{remark}[Computing $\alpha$ and $\beta$]
Though $\alpha$ and $\beta$ that satisfy Assumption~\ref{a:ab} always exist, it can be difficult to determine numerical values for them since \eqref{eq:p1} and \eqref{eq:p2} depend on quantities like the Lipschitz constant of $\nabla f$ and the model Hessian bound.
These values can be viewed as guidelines for understanding what can prevent convergence.
Nevertheless, we did not find it a difficult practical issue in our computational results. 
\end{remark}

\begin{remark}[Lipschitz Constant]
Consider the limit of $L$, the Lipschitz constant of $\nabla f$, diverging to $\infty$.
The quantity $\zeta$ is proportional
proportional to $L$.
(See \eqref{eq:zeta} and \eqref{eq:kef} for the definition of $\zeta$.)
It follows that right-hand side of \eqref{eq:p1} converges to a constant as $L\to\infty$, in which case that restriction on $\alpha$ and $\beta$ approaches a finite number.
In contrast, the growth of the right-hand side of \eqref{eq:p2} is unbounded as $L\to\infty$, which drives $1-\beta$ and $(1-\alpha)c_2$ to zero.
Thus, as $L$ increases the computed reductions must be sufficiently accurate often because otherwise the increase from a poor step could be large.
\end{remark}

\begin{remark}[Complexity]
We will see in Section~\ref{sec:co} that the complexity of ProxSTORM is inversely proportional to $\Theta$, which itself is proportional to $1-\nu$.
The complexity is also proportional to $\zeta^2$.
Thus, we have the following.
\begin{itemize}
\item
An increase in $\zeta$---be it through $\kef, \kgrad$ or $\kbmh$ (model parameters), $\eta$ (a computed reduction parameter), or even $\eta_1$ (a step acceptance parameter)---adversely affects the complexity of ProxSTORM.
\item
The difference $1-\nu$ (and hence $\Theta$) is asymptotically proportional to $\eta_1\eta_2$; see \eqref{eq:nu}.
In other words, if the algorithm accepts steps using more relaxed conditions it may make less progress per step.
\end{itemize}
\end{remark}

\begin{remark}[Prior Work]
The parameters $\zeta$ and $\nu$ differ slightly from their definitions in \cite{storm,storm-rates}, stemming, in part, from our alternative definition of $\mathcal{J}_k$.
\end{remark}

To provide a sense of reasonable values for our parameters, we note that the numerical results in Section~\ref{sec:ex} use $\eta_1 = \frac{1}{2}$ and $\gamma = 5$.

\subsection{First-Order Global Convergence}\label{sec:gc}

We begin this subsection with the an overview of ProxSTORM's global convergence.
On a realization of the algorithm $\omega$ such that $\lim_{k\to\infty}\delta_k = 0$ but $\liminf_{k\to\infty} \|h(x_k)\|\neq 0$, we have that for any constant $c$, there exists a number $K$ dependent on $c$ such that
$$
  \|h(x_k)\| \ge c\delta_k \quad \text{for all} \quad k > K.
$$
For $c \ge \zeta$ and $k > K$, $\omega\in \mathcal{I}_k\cap \mathcal{J}_k$ implies acceptance of the step $s_k$
(see Lemma~\ref{lem:step}).
Acceptance of the step increases the trust-region radius
as per $\delta_{k+1} = \gamma\delta_k,$
but we will show $\omega\in \mathcal{I}_k\cap \mathcal{J}_k$ for enough $k$ that $\lim_{k\to\infty}\delta_k \neq 0$; that is,
\begin{equation}\label{eq:disjoint}
  \mathbb{P}\left(\left\{\lim_{k\to\infty}\Delta_k = 0\right\} \cap \left\{\liminf_{k\to\infty}\|h(X_k)\| \neq 0\right\} \right) = 0.
\end{equation}
Our first result in this subsection is to show that the second event in \eqref{eq:disjoint} has probability one.
This will imply that if indeed \eqref{eq:disjoint} holds, the first event (a lack of global convergence) must have probability zero.

\begin{corollary}[Trust-Region Radius Converges to Zero]\label{cor:tozero}
  Suppose the assumptions of Lemma~\ref{lem:incs} hold.
Then the ProxSTORM trust-region radius, $\Delta_k$, satisfies
\begin{equation*}\label{eq:tozero}
  \lim_{k\to\infty} \Delta_k = 0
\quad\text{with probability one}.
\end{equation*}
\end{corollary}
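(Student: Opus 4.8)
The plan is to leverage Theorem~\ref{thm:incs} via a supermartingale (or near-supermartingale) argument to force $\sum_k \Delta_k^2 < \infty$ almost surely, which of course implies $\Delta_k \to 0$. First I would note that $\Psi_k$ is bounded below: by Assumption~\ref{a:p}(3) the objective $\sobj+\nobj$ is bounded below on $\dom\nobj$, and $(1-\nu)\Delta_k^2 \ge 0$, so $\Psi_k \ge \nu\,\Psi_{\min}$ for some constant $\Psi_{\min}$. Theorem~\ref{thm:incs} gives $\mathbb{E}[\Psi_{k+1}-\Psi_k \mid \filt_{k-1}] \le -\Theta\Delta_k^2 \le 0$, so $\{\Psi_k\}$ is a nonnegative (after shifting by the lower bound) supermartingale with respect to the filtration indexed by $k-1$; note $\Delta_k$ is $\filt_{k-1}$-measurable, which makes the conditioning consistent.

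Next I would apply the supermartingale convergence theorem: since $\Psi_k - \nu\Psi_{\min} \ge 0$ and is a supermartingale, it converges almost surely to a finite limit $\Psi_\infty$. Taking total expectations in Theorem~\ref{thm:incs} and telescoping,
\begin{equation*}
  \Theta\sum_{k=0}^{K}\mathbb{E}[\Delta_k^2] \le \mathbb{E}[\Psi_0] - \mathbb{E}[\Psi_{K+1}] \le \mathbb{E}[\Psi_0] - \nu\Psi_{\min},
\end{equation*}
so $\sum_{k=0}^\infty \mathbb{E}[\Delta_k^2] < \infty$. By the monotone convergence theorem $\mathbb{E}\big[\sum_k \Delta_k^2\big] < \infty$, hence $\sum_k \Delta_k^2 < \infty$ almost surely, and therefore $\Delta_k \to 0$ almost surely. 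This is the cleanest route and mirrors the STORM argument.

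The main obstacle—really the only subtlety—is making sure the conditioning in Theorem~\ref{thm:incs} is set up correctly for the supermartingale/telescoping step: the bound is stated conditionally on $\filt_{k-1}$, and one must check that $\Psi_k$ is $\filt_{k-1}$-measurable (true, since $X_k$ and $\Delta_k$ are adapted to $\filt_{k-1}$ per Section~\ref{sec:st}) so that the increments telescope properly and the tower property applies when passing to unconditional expectations. A secondary point worth a sentence is that one needs $\mathbb{E}[\Psi_0] < \infty$, which holds since $X_0$ and $\Delta_0$ are deterministic and $\sobj+\nobj$ is finite at $x_0 \in \dom\nobj$. Everything else is routine once the supermartingale structure is in place.
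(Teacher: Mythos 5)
Your proposal is correct and follows essentially the same route as the paper: take total expectations of \eqref{eq:incs}, telescope, bound using the lower bound on $\Psi_k$, invoke monotone convergence to get $\sum_k \Delta_k^2 < \infty$ almost surely, and conclude $\Delta_k \to 0$. The supermartingale-convergence digression is harmless but unnecessary, since the telescoping argument alone suffices.
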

\begin{proof}
Taking the expectation of \eqref{eq:incs} and summing over $k$ gives
$$
  \Theta \mathbb{E}\left[\sum_{k=1}^{n} \Delta_k^2\right] \le \Psi_0 - \mathbb{E}[\Psi_{n+1}].
$$
Since both $\{\Psi_k\}_k$ and $\{\Delta_k\}_k$ are uniformly bounded from below, the monotone convergence theorem implies
\begin{equation*}\label{eq:essum}
  \mathbb{E}\left[\sum_{k=1}^\infty \Delta_k^2\right] = \lim_{n\to\infty} \mathbb{E}\left[\sum_{k=1}^n\Delta_k^2\right] < \infty.
\end{equation*}
It follows from the definition of the expectation that this is possible only if $\sum_{k=1}^\infty\Delta_k^2$ is finite on sets of positive probability, which completes the proof.
\end{proof}

To make our overview of the global convergence proof precise, we must relate our reasoning about individual realizations of the algorithm to the frequency of $\mathcal{I}_k\cap \mathcal{J}_k$, which is formulated in terms of probabilities, i.e., in terms of the ensemble of ProxSTORM realizations.
We navigate this technicality with martingales \cite{durrett5}.

\begin{definition}
A sequence of integrable random variables, $Y_1$, $Y_2$, $\ldots$ is a \emph{submartingale} with respect to a filtration $\mathcal{G}_1 \subseteq \mathcal{G}_2 \subseteq \ldots$ if, for all $k$,
$$
  \text{$Y_k$ is measurable with respect to $\mathcal{G}_k$}
  \quad\text{and}\quad
  E[Y_{k+1}|\mathcal{G}_k] \ge Y_k.
$$
\end{definition}

\begin{theorem}[Submartingale Convergence]\label{thm:convergence}
  Suppose $Y_1$, $Y_2$, $\ldots$ is a submartingale with $Y_{k+1} - Y_k\le c < \infty$ for some constant $c$.
Then
\begin{equation}\label{eq:mconv}
  \mathbb{P}\left(\left\{\lim_{k\to\infty} Y_k \text{ exists and is finite}\right\} \cup \left\{\limsup_{k\to\infty} Y_k = \infty \right\}\right) = 1.
\end{equation}
\end{theorem}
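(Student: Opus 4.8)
The plan is to prove the submartingale convergence theorem (Theorem~\ref{thm:convergence}) by a standard stopping-time/upcrossing argument, reducing the unbounded increments to a bounded-increments situation where Doob's martingale convergence theorem applies directly. The key observation is that the hypothesis $Y_{k+1} - Y_k \le c$ controls \emph{upward} jumps, which is exactly what one needs to prevent a submartingale from oscillating without converging, as long as it stays bounded above.

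\medskip

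\noindent\textbf{Proof sketch.}
Fix a constant $B > 0$ and define the stopping time $\tau_B := \inf\{k : Y_k \ge B\}$, with the convention $\tau_B = \infty$ if $Y_k < B$ for all $k$. Consider the stopped process $Y_k^{(B)} := Y_{k\wedge\tau_B}$. By the optional stopping machinery for submartingales, $\{Y_k^{(B)}\}_k$ is again a submartingale with respect to $\{\mathcal{G}_k\}_k$. The point of stopping at level $B$ is that $Y_k^{(B)}$ is now bounded above: before time $\tau_B$ we have $Y_k < B$, and at the single instant $k = \tau_B$ the increment hypothesis gives $Y_{\tau_B} = Y_{\tau_B - 1} + (Y_{\tau_B} - Y_{\tau_B-1}) \le B + c$, after which the process is frozen. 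Hence $\sup_k \mathbb{E}[(Y_k^{(B)})^+] \le B + c < \infty$, so Doob's submartingale convergence theorem applies and $\lim_{k\to\infty} Y_k^{(B)}$ exists and is finite almost surely. On the event $\{\tau_B = \infty\} = \{\sup_k Y_k < B\}$ we have $Y_k^{(B)} = Y_k$ for all $k$, so on this event $\lim_{k\to\infty} Y_k$ exists and is finite. Taking the union over $B = 1, 2, 3, \ldots$,
$$
  \left\{\sup_{k} Y_k < \infty\right\} = \bigcup_{B=1}^\infty \{\tau_B = \infty\}
  \subseteq \left\{\lim_{k\to\infty} Y_k \text{ exists and is finite}\right\}
  \quad\text{a.s.}
$$
Finally, the complementary event $\{\sup_k Y_k = \infty\}$ is, by the increment bound $Y_{k+1} - Y_k \le c$, exactly the event $\{\limsup_{k\to\infty} Y_k = \infty\}$: a sequence with bounded upward steps and $\sup_k Y_k = \infty$ cannot have $\limsup$ finite. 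Combining the two cases gives \eqref{eq:mconv}.

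\medskip

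\noindent The step I expect to be the main (though standard) obstacle is verifying that the stopped process $Y_k^{(B)}$ is genuinely a submartingale and is bounded above by a constant that does not depend on $k$; this requires handling the single ``overshoot'' increment at $k = \tau_B$ carefully, which is precisely where the hypothesis $Y_{k+1} - Y_k \le c$ is used. Everything else is a routine invocation of Doob's convergence theorem and a countable union. Since this is a well-known result (it is the submartingale analogue used in \cite{storm} and is a standard consequence of the theory in \cite{durrett5}), the cleanest presentation may simply cite those references rather than reproduce the stopping-time argument in full; I would include the short argument above only if a self-contained treatment is desired.
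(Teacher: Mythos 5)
Your proposal is correct, and it is essentially the argument behind the result the paper invokes: the paper gives no proof at all, simply citing \cite[Exercise 4.2.4]{durrett5}, and your stopping-time/Doob argument is the standard solution to that exercise, so your closing suggestion to just cite matches what the authors actually do. One small technical remark: the stopped process $Y_{k\wedge\tau_B}$ is not bounded above by the constant $B+c$ on the event $\{\tau_B=1\}$ (there is no $Y_{\tau_B-1}$ to compare against, and $Y_1$ may exceed $B+c$); the fix is immediate, since integrability of $Y_1$ gives $\sup_k \mathbb{E}\bigl[(Y_{k\wedge\tau_B})^+\bigr]\le B+c+\mathbb{E}[Y_1^+]<\infty$, which is all Doob's convergence theorem requires. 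Also note that $\{\sup_k Y_k=\infty\}=\{\limsup_k Y_k=\infty\}$ holds for any real-valued sequence (finitely many terms cannot produce an infinite supremum), so the increment bound is not needed in that final step--its only role is exactly where you placed it, controlling the overshoot at $\tau_B$.
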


\begin{proof}
  \cite[Exercise 4.2.4]{durrett5}.
\end{proof}

Similar to \cite[Theorem 4.16]{storm}, we use Theorem~\ref{thm:convergence} to prove global convergence.

\begin{theorem}[Global Convergence]\label{thm:liminf}
  Suppose the assumptions of Lemma~\ref{lem:incs} hold.
Then the ProxSTORM iterate, $X_k$, satisfies
\begin{equation*}
  \liminf_{k\to\infty} \|h(X_k)\| = 0 \quad\text{with probability one}.
\end{equation*}
\end{theorem}

\begin{proof}
  For $k\in\mathbb{N}$, let
\begin{equation*}
  W_k := \sum_{\ell = 0}^k \left[2\mathbbm{1}(\mathcal{I}_\ell\cap \mathcal{J}_\ell) - 1\right].
\end{equation*}
The sequence of $W_k$ is adapted to $\mathcal{F}_k$, and since $\alpha\beta > 1/2$, the sequence is a submartingale with respect to $\mathcal{F}_k$.
Concretely,
\begin{equation*}
  \mathbb{E}\left[W_{k+1}\Big|\mathcal{F}_k\right]
  = 2 \mathbb{E}\left[\mathbbm{1}(\mathcal{I}_{k + 1})E[\mathbbm{1}(\mathcal{J}_{k + 1}) | \mathcal{F}_{k + \frac{1}{2}}] \Big| \mathcal{F}_{k}\right] - 1 + W_k
  \ge 2 \alpha\beta - 1 + W_k
  \ge W_k.
\end{equation*}
Realizations of the submartingale have increments $W_k(\omega) - W_{k-1}(\omega) = \pm 1$, meaning $W_k$ cannot converge pointwise.
From Theorem~\ref{thm:convergence} then,
\begin{equation}\label{eq:liminf1}
  \mathbb{P}\left(\limsup_{k\to\infty}W_k = \infty\right) = 1.
\end{equation}
We use \eqref{eq:liminf1} to make the overview before Corollary~\ref{cor:tozero} precise.
Choose an $\omega$ in
\begin{equation}\label{eq:liminf4}
\left\{\lim_{k\to\infty}\Delta_k = 0\right\} \cap \left\{\liminf_{k\to\infty}\|h(X_k)\|\neq 0\right\},
\end{equation}
Since $\omega$ belongs to the latter event in \eqref{eq:liminf4}, there exists $\epsilon(\omega) > 0$ and $K_1(\omega)\in\mathbb{N}$ such that
$$
  \|h(X_k(\omega))\|\ge \epsilon(\omega) \quad\text{for all}\quad k>K_1(\omega).
$$
Let
$$
  \iota(\omega) := \min\left\{\frac{\overline{\delta}_{\max}}{\gamma},
                 \frac{\epsilon(\omega)}{2}\min\left\{\frac{1}{\eta_2},\frac{\min\{\kfcd,1\}(1-\eta_1-\eta)}{4\kef}\right\}\right\}.
$$
Since $\omega$ belongs to the former event in \eqref{eq:liminf4}, there exists a number $K_2(\omega)$ for which
$$
  \Delta_k(\omega) \le \iota(\omega) \quad\text{for all}\quad k>K_2(\omega).
$$
Let $k > K(\omega) := \max\{K_1(\omega),K_2(\omega)\}$ hereafter in this proof.
From the definition of $\iota$, we have that $\omega\in \mathcal{I}_k$ implies
\begin{align}\label{eq:liminf2}
  \|h(X_k(\omega))\| - \|H_k(\omega)\| &\le \|h(X_k(\omega)) - H_k(\omega)\|  \notag \\
  &\le \Big\|\nabla f(X_k(\omega)) - \nabla_x M_k(X_k(\omega);\omega)\Big\| \nonumber
  \le \kgrad\Delta_k(\omega) \le \frac{\epsilon(\omega)}{2}.
\end{align}
The second of these inequalities follows from \eqref{eq:ne}.
Since $\|h(X_k(\omega))\|\ge\epsilon(\omega)$, the inequalities rearrange into
\begin{equation*}
  \|H_k(\omega)\| \ge \frac{\epsilon(\omega)}{2} \ge \max\left\{\eta_2,\frac{4\kef}{\min\{\kfcd,1\}(1-\eta_1-\eta)}\right\} \Delta_k(\omega),
\end{equation*}
which means $\omega$ belongs to the event $\mathcal{R}_k$ defined before Lemma~\ref{lem:step}.
Our choice of $\omega$ therefore results in one of two possibilities for all $k>K(\omega)$:
\begin{enumerate}[i)]\small
\item $\omega\in \mathcal{I}_k\cap \mathcal{J}_k$, in which case Lemma~\ref{lem:step} ensures that ProxSTORM accepts $S_k(\omega)$.
Consequently, $\Delta_{k+1}(\omega) = \gamma\Delta_k(\omega)$, meaning
$$
  \log_{\gamma}\Delta_{k+1}(\omega) - \log_{\gamma}\Delta_{k}(\omega) = \log_{\gamma}\gamma = +1 = W_k(\omega) - W_{k-1}(\omega).
$$
\item $\omega\notin \mathcal{I}_k\cap \mathcal{J}_k$, in which case
$$
  \log_{\gamma}\Delta_{k+1}(\omega) - \log_{\gamma}\Delta_k(\omega) \ge \log_{\gamma}\frac{1}{\gamma} = -1 = W_k(\omega) - W_{k-1}(\omega).
$$
\end{enumerate}
In both of these possibilities, the increments of $\log_{\gamma}\Delta_{k+1}(\omega)$ upper bound the increments of $W_k(\omega)$.
Thus, there exists some constant $C(\omega)$ for which
$$
  \limsup_{k\to\infty} W_k(\omega) \le \limsup_{k\to\infty}\log_{\gamma}\Delta_{k+1}(\omega) + C(\omega).
$$
Since $\lim_{k\to\infty}\Delta_k(\omega) = 0$, it follows that $\limsup_{k\to\infty} W_k(\omega) = -\infty$.
Due to \eqref{eq:liminf1} then, we arrive at \eqref{eq:disjoint} by noting
\begin{align*}
  0 = \mathbb{P}\left(\limsup_{k\to\infty} W_k \neq \infty\right)
  &\ge \mathbb{P}\left(\limsup_{k\to\infty} W_k = -\infty\right)  \\
  &\ge \mathbb{P}\left(\left\{\liminf_{k\to\infty} \|h(X_k)\| \neq 0\right\}\cap\left\{\lim_{k\to\infty} \Delta_k = 0\right\}\right).
\end{align*}
Corollary~\ref{cor:tozero}, however, implies the second event in the intersection has probability one, so
\begin{equation*}
  \mathbb{P}\left(\liminf_{k\to\infty} \|h(X_k)\| \neq 0\right) = 0,
\end{equation*}
which completes the proof.
\end{proof}

We can strengthen Theorem~\ref{thm:liminf} into a limit-type result.

\begin{theorem}[Limit-Type Convergence]\label{thm:lim}
  Suppose the assumptions of Lemma~\ref{lem:incs} hold.
Then the ProxSTORM iterate, $X_k$, satisfies
\begin{equation*}
  \lim_{k\to\infty} \|h(X_k)\| = 0 \quad\text{with probability one}.
\end{equation*}
\end{theorem}
\begin{proof}
See Appendix~\ref{sec:app1}. The arguments are similar the proof of \cite[Theorem 4.18]{storm}.
\end{proof}

\subsection{Expected Complexity Bound}\label{sec:co}

Let $T_\epsilon$ be the iteration of ProxSTORM for which the true proximal gradient, $h(X_k)$, is first smaller than some threshold $\epsilon$, i.e., let
$$
  T_{\epsilon} := \inf_{\|h(X_k)\|\le \epsilon} k.
$$
Algorithm~\ref{alg:ps} and the definition of the filtration \eqref{eq:filt} imply that the event $\{T_\epsilon = k\}$ is measurable with respect to $\mathcal{F}_{k-1}$ (informally, $\mathcal{F}_{k-1}$ contains sufficient information to determine whether $\omega\in \{T_\epsilon = k\}$ or not).
Consequently, $T_\epsilon$ is a stopping time.
We apply \cite[Theorem 2]{storm-rates} to bound the expectation of $T_\epsilon$ and thus extend the expected complexity bound for STORM to ProxSTORM.
Specifically, we have the following.

\begin{theorem}[{\cite[Theorem 2]{storm-rates}}]\label{thm:co}
  Suppose the following conditions hold.
\begin{enumerate}
\item
The trust region radius $\Delta_k$ is uniformly bounded by some number $\delta_{\max} > 0$ independent of $k$ and $\omega$.

\item
There exists a constant $\lambda$ such that, for $k\in\mathbb{N}$, the trust region radius $\Delta_k$ satisfies
\begin{multline}\label{eq:rw}
  \Delta_{k+1}\one\left(T_{\epsilon}>k\right) \geq \min(\Delta_{k}e^{\lambda \widetilde{W}_{k}},\delta_{\epsilon})\one\left(T_{\epsilon }>k\right),\\
  \text{where}\quad
  \delta_\epsilon := \min\left\{\delta_0\gamma^{\left\lfloor\log_\gamma\left(\delta_0^{-1}\frac{\epsilon}{\zeta}\right)\right\rfloor},\delta_{\max}\right\}
\end{multline}
and $\widetilde{W}_{k}$ satisfies
\begin{equation*}
  \mathbb{P}(\widetilde{W}_{k}=1|\mathcal{F}_{k-1})  \geq \alpha\beta
  \quad\text{and}\quad
  \mathbb{P}(\widetilde{W}_{k}=-1|\mathcal{F}_{k-1}) \leq  1-\alpha\beta.
\end{equation*}

\item
There exists a positive constant $\Theta$ such that, for $k\in\mathbb{N}$, the function $\Psi_k$ satisfies
\begin{equation*}
{\mathbb{E}}[\Psi_{k+1}-\Psi_{k}|\mathcal{F}_{k}]\one\left(  T_{\epsilon}>k\right)
\leq -\Theta \Delta_{k}^2\one\left( T_{\epsilon}>k\right).
\end{equation*}
\end{enumerate}
Then
\begin{equation}\label{eq:complexity}
  \mathbb{E}[T_{\epsilon}] \le \frac{\alpha\beta}{2\alpha\beta - 1}\frac{\widetilde{\Psi}_0}{\Theta \delta_\epsilon^2} + 1
                           \le \frac{\alpha\beta}{2\alpha\beta - 1}\frac{\widetilde{\Psi}_0}{\Theta}\max\left\{\frac{\gamma\zeta}{\epsilon},\frac{1}{\delta_{\max}}\right\}^2 + 1,
\end{equation}
where $\widetilde{\Psi}_0 = \Psi_0 - \nu\inf_{x\in\R^d}\left\{\sobj(x) + \nobj(x)\right\}$.
\end{theorem}

The $\Psi_0$ in this theorem the function $\Psi_k$ at $k=0$, and the term subtracted from $\Psi_0$ is a lower bound for all possible values of $\Psi_k$, $k\in\mathbb{N}$, which is finite per Assumption~\ref{a:p}.
The constant $\zeta$ is defined in \eqref{eq:zeta}, and $\alpha$ and $\beta$ are the probability lower bounds in Assumptions~\ref{a:m2} and~\ref{a:c}.
Per Assumption~\ref{a:ab},
$$\alpha\beta > \frac{1}{2},$$
which is also required for our global convergence result (Theorem~\ref{thm:liminf}).
To see that the conditions of Theorem~\ref{thm:co} are satisfied by ProxSTORM, recall that
ProxSTORM has the maximal trust-region radius $\delta_{\max} = \delta_0\gamma^\ell$ and
the third condition in Theorem~\ref{thm:co} (\cite[Theorem 2]{storm-rates}) is Lemma~\ref{lem:incs}.
\begin{proposition}\label{p:step}
  Suppose the assumptions of Lemma~\ref{lem:incs} hold.
Then the second condition of Theorem~\ref{thm:co} is satisfied with $\lambda = \log\gamma$ and $\widetilde W_k = \mathbbm{1}(\mathcal{I}_k\cap\mathcal{J}_k) - 1$ and therefore ProxSTORM has the expected complexity \eqref{eq:complexity}. 
\end{proposition}
\begin{proof}
Let $\lambda = \log\gamma$.
The trust-region update rule of ProxSTORM implies that \eqref{eq:rw} holds when $\widetilde{W}_k = -1$.
When $\widetilde{W}_k = 1$, the inequality becomes
\begin{equation}\label{eq:co1}
  \Delta_{k+1}\one\left(T_{\epsilon}>k\right) \geq \min(\gamma\Delta_{k},\delta_{\epsilon})\one\left(T_{\epsilon }>k\right).
\end{equation}
We show that this expression holds on $\mathcal{I}_k\cap \mathcal{J}_k$, which is sufficient for the proposition to hold.

On the event $\{T_\epsilon \le k\}$, \eqref{eq:co1} is trivial.
On $\{T_\epsilon>k\}$, first consider $\{\Delta_k >\delta_\epsilon\}$.
Since both $\Delta_k$ and $\delta_\epsilon$ are the initial trust-region radius times an integer power of the parameter $\gamma$, we have that on $\{\Delta_k >\delta_\epsilon\}$,
$$\delta_{k+1}\ge \gamma^{-1}\delta_k \ge \delta_\epsilon.$$
Meanwhile, $\mathcal{I}_k\cap \{T_k > k\}\cap \{\Delta_k\le \delta_\epsilon\}\subseteq\mathcal{R}_k$, where $\mathcal{R}_k$ is the event defined before Lemma~\ref{lem:step}.
This is because $\epsilon\ge \zeta\delta_k$ on $\{\Delta_k\le \delta_\epsilon\}$, so on $\mathcal{I}_k\cap \{T_k > k\}\cap \{\Delta_k\le \delta_\epsilon\}$,
$$
  \|h_k\| \ge \|h(x_k)\| - \kgrad\delta_k > \epsilon - \kgrad\delta_k \ge (\zeta - \kgrad)\delta_k.
$$
From Lemma~\ref{lem:step} then,
$\mathcal{I}_k\cap \mathcal{J}_k\cap \{T_k > k\}\cap \{\Delta_k\le \delta_\epsilon\} \subseteq \mathcal{S}_k$, and on $\mathcal{S}_k$,
$$\delta_{k+1} \ge \gamma\delta_k.$$
Thus, \eqref{eq:co1} holds for
$$\{T_\epsilon \le k\}
\cup \Big(\{T_\epsilon > k\}\cap \{\Delta_k >\delta_\epsilon\}\Big)
\cup \Big(\mathcal{I}_k\cap \mathcal{J}_k \cap \{T_\epsilon > k\}\cap \{\Delta_k \le \delta_\epsilon\}\Big) \supseteq \mathcal{I}_k\cap\mathcal{J}_k,
$$
which completes the proof.
\end{proof}

\begin{remark}
Theorem~\ref{thm:co} establishes that the expected iteration complexity of ProxSTORM for achieving $\epsilon$-approximate stationary point is of the order of $\epsilon^{-2}$. 
This complexity matches that of STORM and well as that of the deterministic proximal trust region method in \cite{baraldi.2022}.
It also essentially matches all continuous optimization methods that rely only on first order information.
The other key constants in the complexity bound are $\alpha\beta$---the closer $\alpha\beta$ is to $\frac{1}{2}$ the larger $\frac{\alpha\beta}{2\alpha\beta - 1}$ is. 
See the remarks after the proof of Lemma~\ref{lem:incs} for a discussion on $\Theta$, $\zeta$ and other constants.
\end{remark}

\section{Oracle Assumptions and Total Sample Complexity}\label{sec:oracles}
Algorithm~\ref{alg:ps} relies on two stochastic oracles---the one that produces the model $m_k$ and the one that computes the computed reduction $\credr_k$.
The key stochastic quantity that defines the model is its gradient $\nabla M_k(X_k)=G_k$, which has to satisfy Assumption \ref{a:m2}.
The computed reduction has to satisfy  Assumption~\ref{a:c} and Assumption~\ref{a:new}.
In this section, we discuss these assumptions and how they can be ensured in some common settings.

We first consider the standard stochastic optimization and expected risk minimization setting where (with some abuse of notation)
\[
f(x)= \mathbb{E}_\xi [ f(x,\xi) ]
\]
with $f(x,\xi)$ denoting a random function dependent on the random variable $\xi$. 
One of the common and important assumptions made in stochastic optimization and machine learning literature is that
\begin{equation}\label{eq:expected_grad}
\nabla f(x)= \mathbb{E}_\xi [\nabla  f(x,\xi) ],
\end{equation}
in other words that integration and differentiation commute.
Suppose
$$\mathbb{E}_\xi [\|\nabla f(x)-\nabla f(x,\xi)\|^2]\le \sigma_g^2$$
uniformly in $x$.
The gradient approximation $g_k$ is usually computed by averaging sample gradients at $x_k$ on a minibatch of $n$ samples:
\[
g_k=\frac{1}{n} \sum_{\ell=1}^n \nabla  f(x_k,\xi_\ell).
\]
Clearly, $\mathbb{E}_\xi[g_k]=\nabla f(x_k)$ and $\mathbb{E}_\xi [\|\nabla f(x_k)-g_k\|^2]\le\frac{\sigma_g^2}{n}$, so a larger $n$ delivers a better approximation.
If for some $\tau$ we want
\[
\mathbb P_\xi\left(\|\nabla f(x)-g_k\|^2\leq \tau^2\right) = \mathbb{P}\left(\|\nabla M_k(X_k) - \nabla f(X_k)\|^2 \le \tau^2 |\filt_{k-1}\right)\ge \alpha,
\]
we have from Markov's inequality that any sample size
\[
n\geq \frac{\sigma_g^2}{\tau^2(1-\alpha)}
\]
achieves this bound.
When the Bernstein inequality is applicable (e.g., when $\nabla f(x,\xi)$ is Gaussian, or uniformly bounded), the bound on $n$ improves to
\[
n\geq \frac{\sigma_g^2}{\tau^2}\log\left(\frac{1}{1-\alpha}\right).
\]
In either case, choosing a sample size on each iteration to be proportional to $\delta_k^{-2}$ ensures Assumption \ref{a:m2} with a $\kgrad$ proportional to $\sigma_g$.
Next, we consider the computed reduction oracle.
We take $\nobj\equiv 0$, which is reasonable for understanding the computed reduction since ProxSTORM is based on $\nobj$ being easy to evaluate exactly.
Given two points $x_k$ and $x_k+s_k$, the actual reduction is $\aredr_k = f(x_k)-f(x_k+s_k)$ and the computed reduction $\credr_k$ can be calculated from a minibatch of samples again:
\[
\credr_k=\frac{1}{n} \sum_{\ell=1}^n \left( f(x_k,\xi_\ell)-  f(x_k+s_k,\xi_\ell)\right).
\]
The minibatch for the $\credr_k$ is chosen randomly and independently from the minibatch used in the computation of $g_k$.
We assume for the simplicity of the analysis that $f(x_k)$ and $f(x_k,\xi_\ell)$ are twice continuously differentiable with Lipschitz continuous second derivatives having Lipschitz constants bounded by some $\tilde{L}$.
Taylor expanding and applying the mean value theorem,
\begin{align*}
  f(x_k,\xi_\ell) - f(x_k+s_k,\xi_\ell) &= -\nabla f(x_k,\xi_\ell)^\top s_k - \frac{1}{2}s_k^\top\nabla^2 f(x_k+t_\ell s_k,\xi_\ell)s_k, && t_\ell\in (0,1), \\
  f(x_k)-f(x_k+s_k) &= -\nabla f(x_k)^Ts_k - \frac{1}{2}s_k^\top\nabla^2  f(x_k+ts_k,\xi)s_k,                                            && \;t\in(0,1).
\end{align*}
It follows that
\begin{multline*}
\aredr_k-\credr_k
= \left[ \frac{1}{n} \sum_{\ell=1}^n  \left(\nabla f(x_k,\xi_\ell)-\nabla f(x_k)\right)^\top \right ]s_k \\
+ \frac{1}{2}s_k^\top\Bigg[
\frac{1}{n}\sum_{\ell=1}^n\left(\nabla^2f(x_k+t_\ell s_k,\xi_\ell) - \nabla^2f(x_k+ts_k,\xi_\ell)\right) \\
+ \frac{1}{n}\sum_{\ell=1}^n\left(\nabla^2f(x_k+ts_k,\xi_\ell) - \nabla^2f(x_k+ts_k)\right)
\Big]s_k,
\end{multline*}
where we have added and subtracted $\nabla^2f(x_k+ts_k,\xi_\ell)$ terms.
Let $g_k^\prime$ be the sample average gradient defined on this $\{\xi_\ell\}$  minibatch.
Note that $g_k^\prime$ is not computed (since this minibatch is used for computing $\credr_k$) but its properties are the same as the properties of $g_k$ computed when building the model. 
Per our notation, $G_k^\prime$ and $N$ is the interpretation of $g_k^\prime$ and $n$, respectively, as random variables formulated in terms of all of the stochasticity in the algorithm.
From the expression for $\aredr_k - \credr_k$, we have
\begin{multline*}
{\mathbb E}\left[|\ared_k-\cred_k|\Big\vert \filt_{k-1/2}\right]
\leq {\mathbb E}\left[\|G_k^\prime-\nabla f(X_k)\|\Big\vert\filt_{k-1/2}\right]\|S_k\| \\
+\frac{1}{2}\left[\tilde L \|S_k\|
+ \frac{1}{N}\mathbb{E}\Big[\big\|\sum_{\ell=1}^N\nabla^2f(x_k+ts_k,\xi_\ell) - N\nabla^2f(x_k+ts_k)\big\|\Big\vert\filt_{k-1/2}\Big]\right]\|S_k\|^2.
\end{multline*}
Using the Cauchy-Schwarz inequality for conditional expectations,
\begin{multline}\label{eq:csce}
{\mathbb E}\left[|\ared_k-\cred_k|\Big\vert \filt_{k-1/2}\right] \\
\leq
\frac{\sigma_g\Delta_k}{\sqrt{N}} + \frac{1}{2}\widetilde{L}\Delta_k^3 + \frac{1}{2\sqrt{N}}{\rm var}\left(\nabla^2f(X_k+tS_k,\xi)\Big\vert\filt_{k-1/2}\right)^{1/2}\Delta_k^2.
\end{multline}
This bound shows that if $N\ge \Delta_k^{-2}$, then Assumption~\ref{a:new} is satisfied up to an order $\Delta_k^3$ term with $c_1 =\sigma_g$. 
Such a relationship between the minibatch size and the trust region radius is the same as the one for the gradient approximation.
We show that it implies Assumption~\ref{a:c} is satisfied as well.
Without loss of generality, we consider the case $\|h_k\|\ge\eta_2\delta_k$ since otherwise $\omega\in\mathcal{J}_k$ because $\credr_k=\aredr_k=0$.
By the fraction of Cauchy decrease condition (S2),
\begin{align*}
  \predr_k \ge \kfcd\|h_k\|\min\left\{\frac{\|h_k\|}{\kbmh},\delta_k\right\}
\ge \kappa_{\rm fcd}\eta_2\delta_k^2\min\left\{\frac{\eta_2}{\kappa_{\rm bmh}},1\right\}.
\end{align*}
Applying Markov's inequality with $N\ge \kappa\Delta_k^{-2}$ for some constant $\kappa$, we have from \eqref{eq:csce} that
\begin{align*}
\mathbb{P}\left(|\ared_k-\cred_k|\ge\eta\pred_k\Big\vert\filt_{k-1/2}\right)
&\le \frac{1}{\eta\pred_k}\mathbb{E}\left[|\ared_k-\cred_k|\Big\vert\filt_{k-1/2}\right] \\
&\le \frac{\sigma_g/\kappa}{\eta\kappa_{\rm fcd}\eta_2\min\left\{\frac{\eta_2}{\kappa_{\rm bmh}},1\right\}} + \mathcal{O}(\Delta_k).
\end{align*}
For $\kappa$ sufficiently large, this bound is smaller than $1-\beta$ to leading order.

In summary, we have shown that all of our oracle assumptions are satisfied by using minibatches of size $\mathcal{O}(\delta_k^{-2})$ under the \emph{common random numbers} setting from the simulation optimization literature.
This setting usually describes stochastic optimization under \eqref{eq:expected_grad}, which includes empirical risk minimization.
The $\mathcal{O}(\delta_k^{-2})$ sample complexity is an improvement over $\mathcal{O}(\delta_k^{-4})$ demonstrated for STORM but only because common random numbers were not considered in \cite{storm}.

If the stochastic process $\Delta_k$ obeys the same dynamics as in ProxSTORM (and in STORM) until the stopping time $T_\epsilon$, then $\Delta_k \geq {\cal O}(\epsilon)$ with high probability; see \cite{Jin25sample}.
The general bounds for the total complexity are given in \cite{Jin25sample} as $\mathcal{O}(\epsilon^{-4})$ in the common random number setting and $\mathcal{O}(\epsilon^{-6})$ in the more general setting of stochastic function estimates that are unbiased with bounded variance.
These bounds match the best-known stochastic optimization bounds in each setting.

\section{Numerical Results}\label{sec:ex}

We apply ProxSTORM to two examples: training a neural network classifier with $\ell^1$ regularization and a stochastic topology optimization problem.

\subsection{Training}

We use the HIGGS dataset for our training example.
The problem is ``to distinguish between a signal process [that] produces Higgs bosons and a background process [that] does not'' \cite{higgs}.
There are 11 million samples, $(x_k,y_k)$, in the dataset.
Each $x_k^\top\in\R^{28}$ represents a process and the corresponding $y_k$ labels whether Higgs bosons are produced by that process or not (one and zero, respectively).
Our model is a two-layer neural network:
\begin{equation}\label{eq:nn}
  m(x,\theta) := \sigma_2(\sigma_1(x_iW_1 + 1b_1^\top)W_2 + 1b_2^\top).
\end{equation}
The weights of the neural network are $\theta := (W_1,b_1,W_2,b_2)$.
The four components of $\theta$ belong to $\R^{28\times h},\R^{h\times 1}, \R^{h\times 1},$ and $\R$, respectively, where $h=10$ is the number of hidden units.
The ``1''s in \eqref{eq:nn} are vectors of ones with the appropriate dimension.
We take the activations $\sigma_1$ and $\sigma_2$ to be the logistic (i.e., sigmoid) function,
$$\sigma(x) = (1 + \exp(-x))^{-1},$$
applied to each component of the input.
We train \eqref{eq:nn} by minimizing its $\ell^1$-regularized classification error under the binary cross entropy loss
$$\ell(p,q) = q\log p + (1-q)\log(1-p).$$
We make the standard assumption that the data $\{(x_k,y_k)\}_k$ are independent and identically-distributed samples of random variables $(X,Y)$ having joint distribution $J$ \cite{hastie.2009}.
The optimization problem is
\begin{equation}\label{eq:bc}
\underset{\theta}{\text{minimize}}\;\; \mathbb{E}_J[\ell(m(X,\theta),Y)] + \lambda\|\theta\|_{\ell^1} \quad\text{with}\quad \lambda = 10^{-2}.
\end{equation}

To approximate the expectation, we view our data as a pool from which we sample.
Following the state of practice for this kind of training problems, we draw a fixed number of samples $n$ each iteration.
We take $n = 100$ and use those samples for the model gradient and Hessian, as well as computed reduction of the objective function for the iteration.
These details deviate from the analysis of ProxSTORM in the following ways:
\begin{enumerate}[i)]
\item The static value of $n$ is in some sense a heuristic choice of $\alpha$ and $\beta$.
\item Assumptions~\ref{a:c} and~\ref{a:new} involve conditioning on $\filt_{k-1/2}$ (i.e., the stochasticity in the models) which is not the case when using the model samples for the computed reduction.
\end{enumerate}
When computing a step that satisfies assumptions (S1) and (S2), we use a limited amount of Hessian information by employing the spectral proximal gradient method \cite[Algorithm 5]{baraldi.2022} for a maximum of two iterations.
Our Hessian applications are matrix free: with the $n$ samples selected for the iteration, we evaluate the application of the Hessian to vectors  instead of instantiating the Hessian as a matrix of values. 
This can be done in closed form, as we do, or using automatic differentiation.
Table~\ref{tbl:nn} documents our ProxSTORM parameters.
In our implementation, we used a maximum number of iterations $\mathfrak{m} = 50$ as a stopping condition.

\begin{table}[h!]
\caption{ProxSTORM parameters for our training example.}\label{tbl:nn}
\begin{tabular}{@{}p{\textwidth}@{}}
\centering
\begin{tabular}{l|l}
$\eta_1$           & $5\times 10^{-1}$ \\
$\eta_2$           & $5\times 10^{-5}$ \\
$\delta_0$         & $1\times 10^{1\hspace{0.2em}}$ \\
$\delta_{\rm max}$ & $1\times 10^{10\hspace{0.2em}}$ \\
$\gamma$           & $5\times 10^{0\phantom{-}}$
\end{tabular}
\end{tabular}
\end{table}

We ran 100 realizations of ProxSTORM.
At the end of each realization, we estimated the accuracy with which ProxSTORM solved \eqref{eq:bc}, i.e., the test error, by evaluating the objective function on the final value of $\theta$ using $10$,$000 = 100n$ data samples.
The train and test data samples were all independent and identically distributed draws from the same large data set, and the random seeds used to collect train and test samples differed for each ProxSTORM realization.
The $+45^\circ$-hatched part of Figure~\ref{fig:nnr} is a histogram of the ProxSTORM test errors.
The $-45^\circ$-hatched part is a histogram of the test error when solving the same problem \eqref{eq:bc} with Adam \cite{adam}, using the same seeds to again generate the same $n = 100$ samples per iteration and test sets.
Applying Adam in this way requires a ``gradient'' of $\|\cdot\|_{\ell^1}$, which we took to be the zero subgradient at zero and the gradient otherwise.
This convention is consistent with packages like PyTorch \cite{pytorch} and JAX \cite{jax}.
To compensate for the $\le 2$ ProxSTORM Hessian applications per iteration, we took $\mathfrak{m} = 150$ as the maximum number of Adam iterations.
\begin{figure}[h!]
\centering
\includegraphics[width=0.5\textwidth]{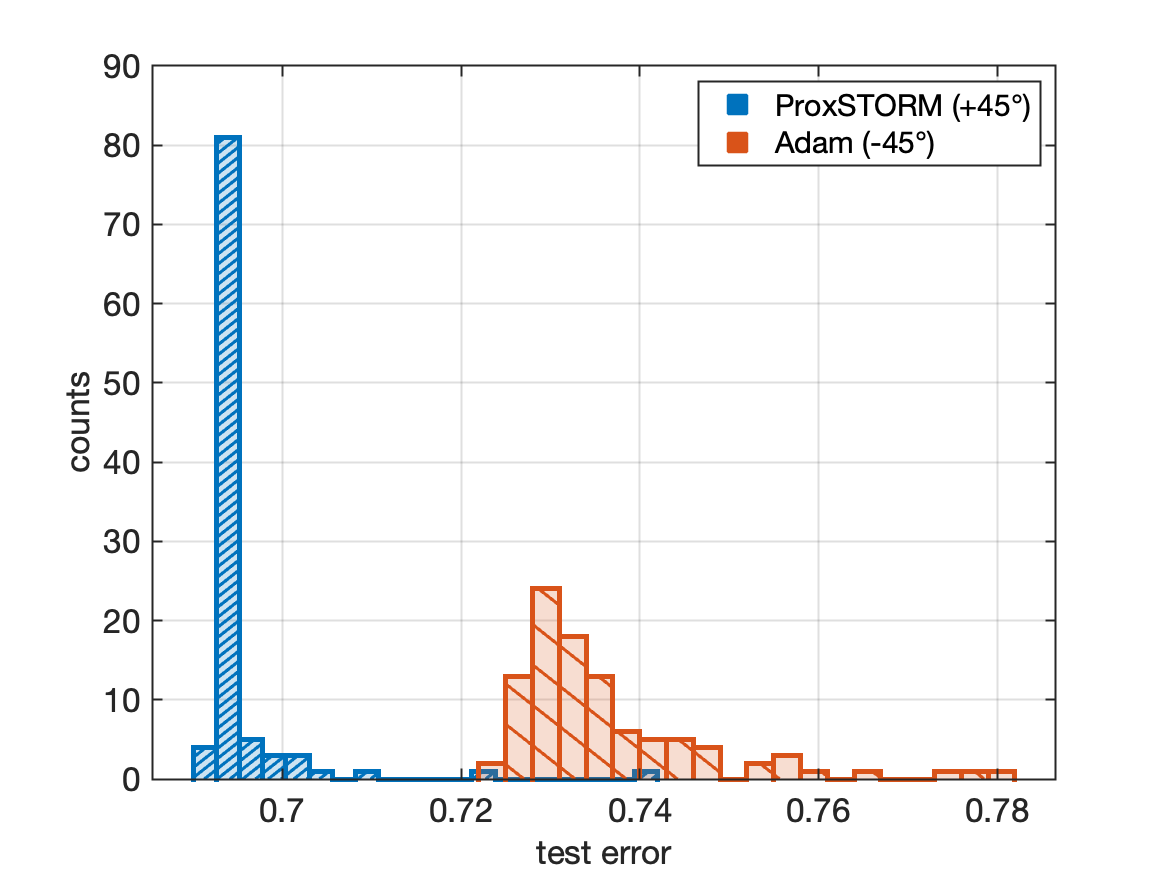}
\caption{Test error for 100 realizations of ProxSTORM (densely hatched) and Adam (sparsely hatched) applied to the $\ell^1$-regularized training example \eqref{eq:bc} for the HIGGS dataset \cite{higgs}.}\label{fig:nnr}
\end{figure}

The value of $\lambda$ is fixed in Figure~\ref{fig:nnr}, but we find that as the regularization parameter $\lambda\to 0$, Adam outperforms ProxSTORM in the test error metric.
When the nonsmoothness is non-negligible, however, ProxSTORM is more accurate and repeatable, as shown by Figure~\ref{fig:nnr}.
An additional comment in support of ProxSTORM is that the ``work complexity'' handicap we implemented through a larger $\mathfrak{m}$ for Adam is not always fair.
In some applications, the bottleneck is the sampling instead of how intensively the algorithm manipulates the data.
An example is when the model is not the neural network \eqref{eq:nn} but a linear system like a discretized partial differential equation whose operator depends on the stochastic variable.
Evaluating sample-average approximations of the objective function will require solves of these linear systems, which can be the most computationally intensive part of the problem.
Once computed, a factorization of the linear operator can be cached and reused to cheaply evaluate derivatives \cite{heinkenschloss.2008}.

\subsection{Topology Optimization}

Our next numerical example is a topology optimization problem that has convex constraints.
Let $D=(0,1)\times(0,2)$ be a two-dimensional domain for designing an elastic structure.
The left edge of the structure, $\Gamma_d := \{0\}\times(0,2)$, is fixed, and a unit force $T$ with uniformly distributed stochastic angle
$$-\frac{\pi}{6}\le \xi\le \frac{\pi}{6}$$
is applied to the structure at the middle of its right edge, as depicted in the left pane of Figure~\ref{fig:topopt}.
The goal is to select a material density
$$\rho: D\to [0,1]$$
for the structure that minimizes its expected compliance among all designs weighing a fraction of $\nu = 0.1$ of a fully-filled domain $D$ (corresponding to $\rho\equiv 1$).
The Young's modulus and Poisson ratio of the material is 200 gigapascals and 29 megapounds per square inch, respectively, corresponding to the properties of, e.g., A36 steel.
The problem is
\begin{equation}\label{eq:topopt1}
\begin{aligned}
&\underset{\rho\in L^2(D)}{\text{minimize}}\quad \int_{\partial D\setminus \Gamma_d} \mathbb{E}_\xi\Big[ \langle T(\xi), u(\rho,\xi)\rangle \big\vert_s\Big]\,\mathrm{d}s \\
&\textup{subject to}\;\; \int_D \rho\,\mathrm{d}s = v\vert D\vert, \quad 0\le\rho\le 1 \quad\text{almost everywhere}
\end{aligned}
\end{equation}
with $|D| = 2$ being the nondimensionalized weight of a fully-filled $D$ and $u(\rho,\xi)=u\in H^1(D)^2$ solves the weak form of the linear elasticity equations
\begin{align*}
  -\nabla\cdot(K(\rho):\varepsilon) &=0 &&\text{in $D$}, \\
  \varepsilon &= \tfrac{1}{2}(\nabla u + \nabla u^\top) &&\text{in $D$}, \\
  K(\rho):\varepsilon n &= T &&\text{on } \partial D\setminus \Gamma_d,\\
  u &=0 &&\text{on } \Gamma_d.
\end{align*}
In these equations, $n$ is the outward pointing normal vector and
\begin{equation}\label{eq:k}
  K(\rho) \coloneqq [\kappa_{\min} + (1-\kappa_{\min})\mathbb{F}(\rho)^3] K_0.
\end{equation}
The quantity $K_0$ is the usual isotropic stiffness tensor and $\mathbb{F}$ is the Helmholtz filter \cite{lazarov.2011}.
We set the parameter $\kappa_{\min} = 4\times 10^{-2}$.
To discretize \eqref{eq:topopt1}, we use a $100\times 200$ uniform quadrilateral mesh on which we use piecewise linear finite elements for $u$ and piecewise constant finite elements for $\rho$.
The discretized problem has $d=100\times 200 =$ 20,000 optimization variables, which we denote by $x$.
We have
\begin{align}\label{eq:topopt2}
\begin{split}
  &\underset{x\in\R^d}{\text{minimize}}\quad \mathbb{E}_\xi\left[\sum_{\ell=1}^2\sum_{k=1}^{2d} \tilde{T}(\xi)_{k\ell}\big[\tilde{K}^{-1}(x)\tilde{T}(\xi)\big]_{k\ell}\right] \\
  &\text{subject to}\;\; \sum_{m=1}^d w_m x_m = \nu|D| = 0.2, \quad
  0\le x_m\le 1\quad m = 1, 2, \ldots, d.
\end{split}
\end{align}
The quantity $\tilde{K}$ is a matrix that depends nonlinearly on $x$, just as $K$ depends nonlinearly on $\rho$ in \eqref{eq:k}.
The quantity $\tilde{T}(\xi)$ is a load vector corresponding to the traction force $T=T(\xi)$.
We obtain $\tilde{T}$ from a $\partial D\setminus \Gamma_d$ integral of $T$ against the piecewise linear test functions that discretize $u$.
In the linear constraint, the $\{w_m\}$ are integration weights for which
$$\sum_{m=1}^d w_mx_m \approx \int_D \rho\mathrm{d}s.$$
For more details about this example, see \cite{andreassen.2011,baiges.2019}.
\begin{figure}[h!]
\centering
\begin{tikzpicture}[every node/.style={draw,outer sep=0pt,thick}]
  \tikzstyle{ground}=[fill,pattern=north east lines,draw=none,minimum width=0.75cm,minimum height=0.3cm]
  \node (M) [fill=white,minimum width=3cm, minimum height=4.8cm] {};
  \node[draw=none] at (2cm,2.2cm) {$D$}; 
  \node (wall) [ground, rotate=-90, minimum width=4.8cm,yshift=-1.7cm] {};
  \draw (wall.north east) -- (wall.north west);
  \draw [-latex,thick,blue] (M.east) ++ (0cm,0) -- +(-0.8cm, 0.25cm);
  \draw [-latex,thick,blue] (M.east) ++ (0cm,0) -- +(-0.8cm,-0.25cm);
  \draw[thick, black] (M.east) -- ++( 1.3cm,0);
  \draw[thick, black] (M.east) -- ++(-0.8cm,0);
  \draw [dashed,blue] (M.east) ++ (0cm,0) --       +( 1.6cm,-0.50cm);
  \draw [dashed,blue] (M.east) ++ (0cm,0) --       +( 1.6cm, 0.50cm);
  \draw[blue] (M.east) ++(0.8cm,0) arc[start angle=0, end angle=-16, radius=0.8];
  \draw[blue] (M.east) ++(0.8cm,0) arc[start angle=0, end angle=+16, radius=0.8];
  \draw[red] (M.east) ++(1.1cm,0) arc[start angle=0, end angle=-16, radius=1.1];

  \node[draw=none,red,font=\normalsize] at (3cm,-0.2cm) {$\xi$};
\end{tikzpicture}\hspace{2em}
\includegraphics[height=0.25\textheight]{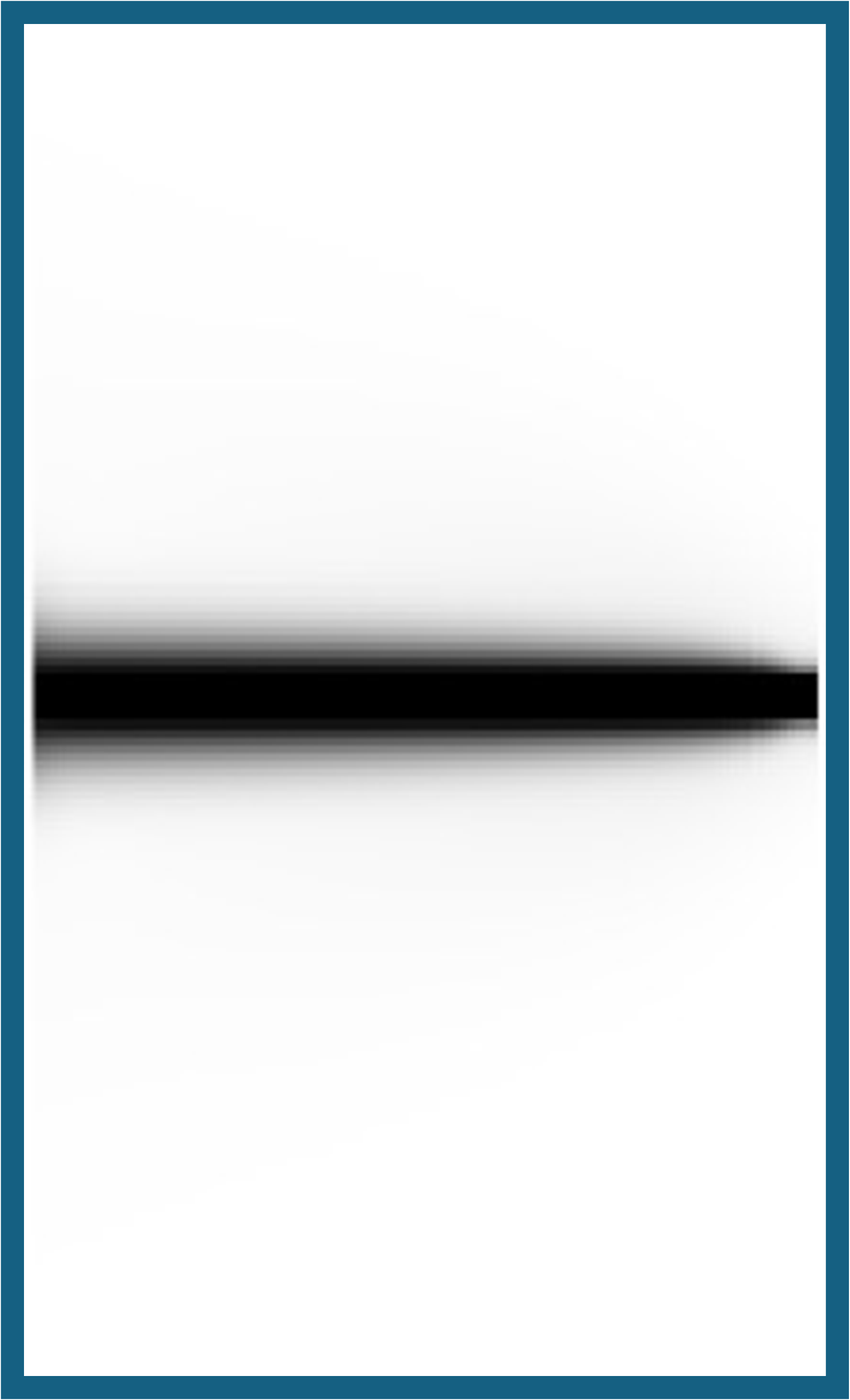}\hspace{1em}
\includegraphics[height=0.25\textheight]{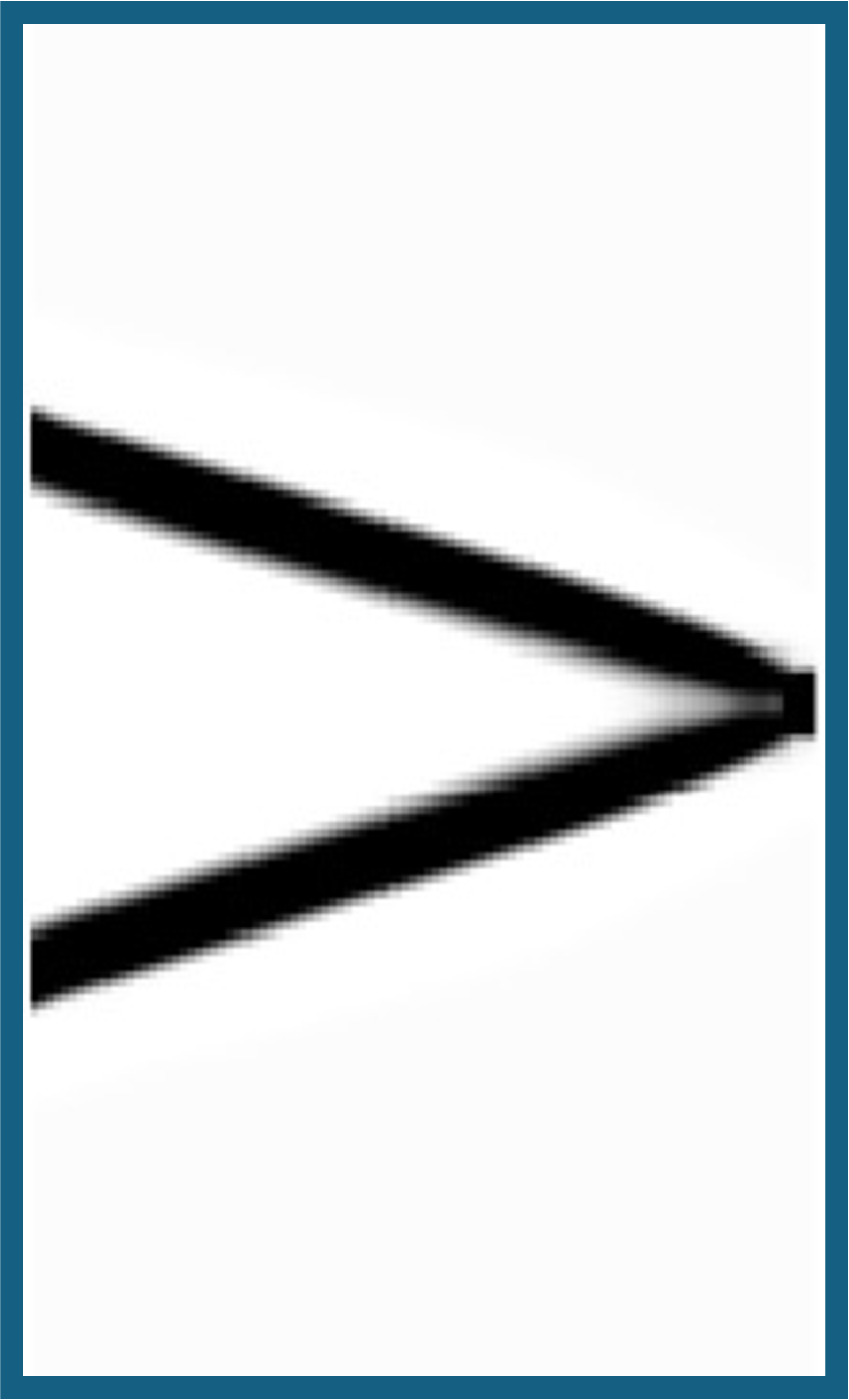}

\caption{Schematic of our topology example (left).
The optimal design when approximating the $\xi$ by its mean (middle) is qualitatively different than the optimal design under uncertainty (right).
}\label{fig:topopt}
\end{figure}

The problem \eqref{eq:topopt2} satisfies Assumption~\ref{a:p}
as it is a smooth objective with convex constraints;
hence, the indicator function (i.e., the function that is zero where the constraints are satisfied and infinity otherwise) is a valid choice of $\nobj$.

We solve \eqref{eq:topopt2} with ProxSTORM using the same parameter values in Table~\ref{tbl:nn}.
Figure~\ref{fig:topopt-test} is the analogue of Figure~\ref{fig:nnr}.
The problem \eqref{eq:topopt2} does not belong to the class of problems Adam is designed to solve, so instead we compare against the deterministic algorithm \cite[Algorithm 1]{baraldi.2022}. 
Both methods deal with the constraints via the proximal operator, as it can be computed exactly.
For the former method, samples of $\xi$ were redrawn each iteration, while for the latter method, the samples were held fixed across iterations, resulting in a sample average approximation solve \cite{shapiro.2021}.
In both settings, we used 3 independent and identically distributed samples of $\xi$ per iteration and ran the methods for a maximum of $\mathfrak{m} = 20$ iterations.
When the norm of the proximal gradient dropped below $10^{-10}$, we considered the deterministic method to be converged and terminated, resulting in the possibility of fewer than $\mathfrak{m}$ iterations.
For the deterministic method, we also used the more flexible trust-region update criteria in \cite[Algorithm 1]{baraldi.2022} with
$$(\eta_1,\,\eta_2) = (10^{-4},\,0.75) \quad\text{and}\quad (\gamma_1,\,\gamma_2) = (0.25,\,10).$$
Aside from the differences in sampling and trust-region updates (which ProxSTORM can be generalized to support),
the two algorithms were the same: they both used the same spectral proximal gradient subproblem solver as the training example but with the maximum number of iterations increased from 2 to 15.

\begin{figure}[h!]
\centering
\includegraphics[width=0.5\textwidth]{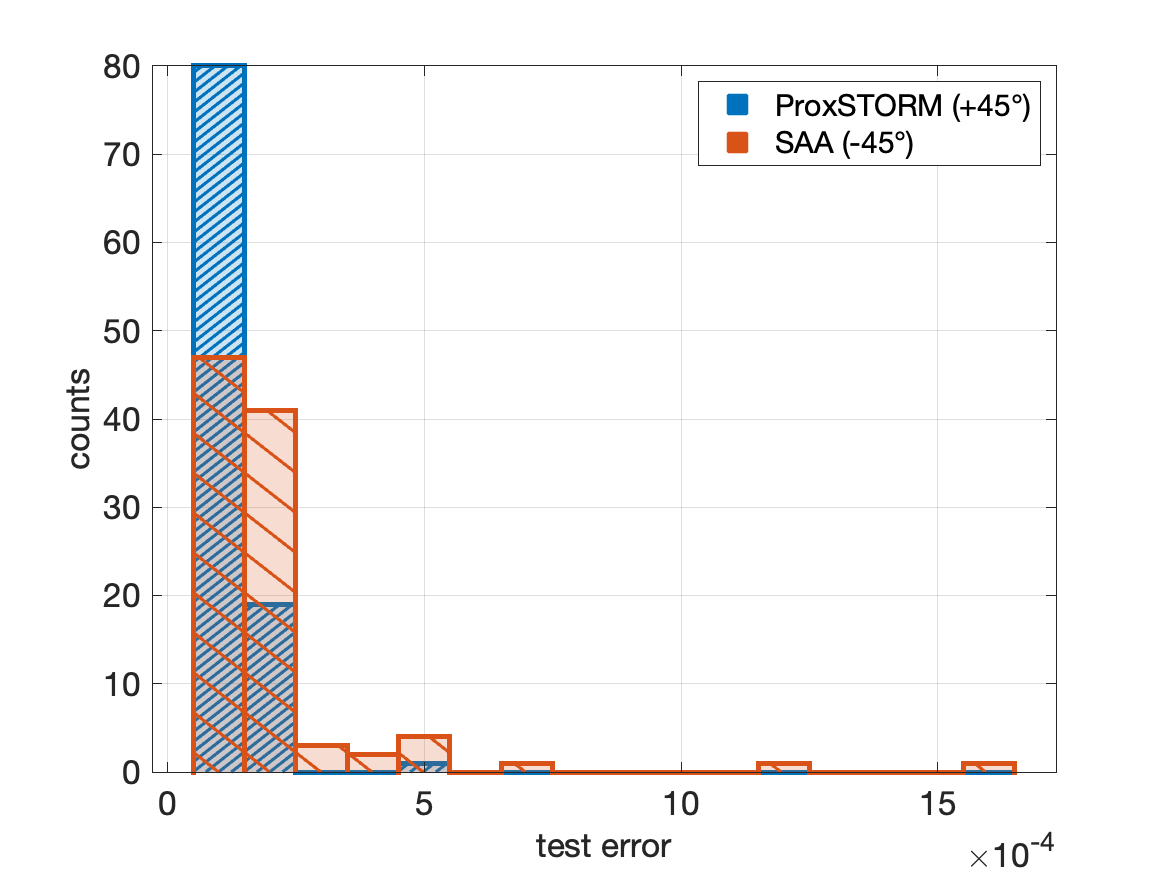}
\caption{Test error for 100 realizations of ProxSTORM (densely hatched) and the deterministic algorithm \cite{baraldi.2022} (sparsely hatched) applied to the topology example \eqref{eq:topopt2}.}\label{fig:topopt-test}
\end{figure}

Figure~\ref{fig:topopt-test} ksupports the notion that a fixed computational budget can go further in solving a stochastic optimization problem when using more samples, i.e., when seeing more of the problem.
This notion is the key distinction between ProxSTORM and its deterministic counterparts: the ProxSTORM analysis shows that ProxSTORM eventually solves the stochastic problem, while the solve of a sample average approximation never will.
For completeness, we include a comparison of the trust region radii of the two algorithms as Figure~\ref{fig:topopt-trr}.
\begin{figure}[h!]
\centering
\includegraphics[width=0.5\textwidth]{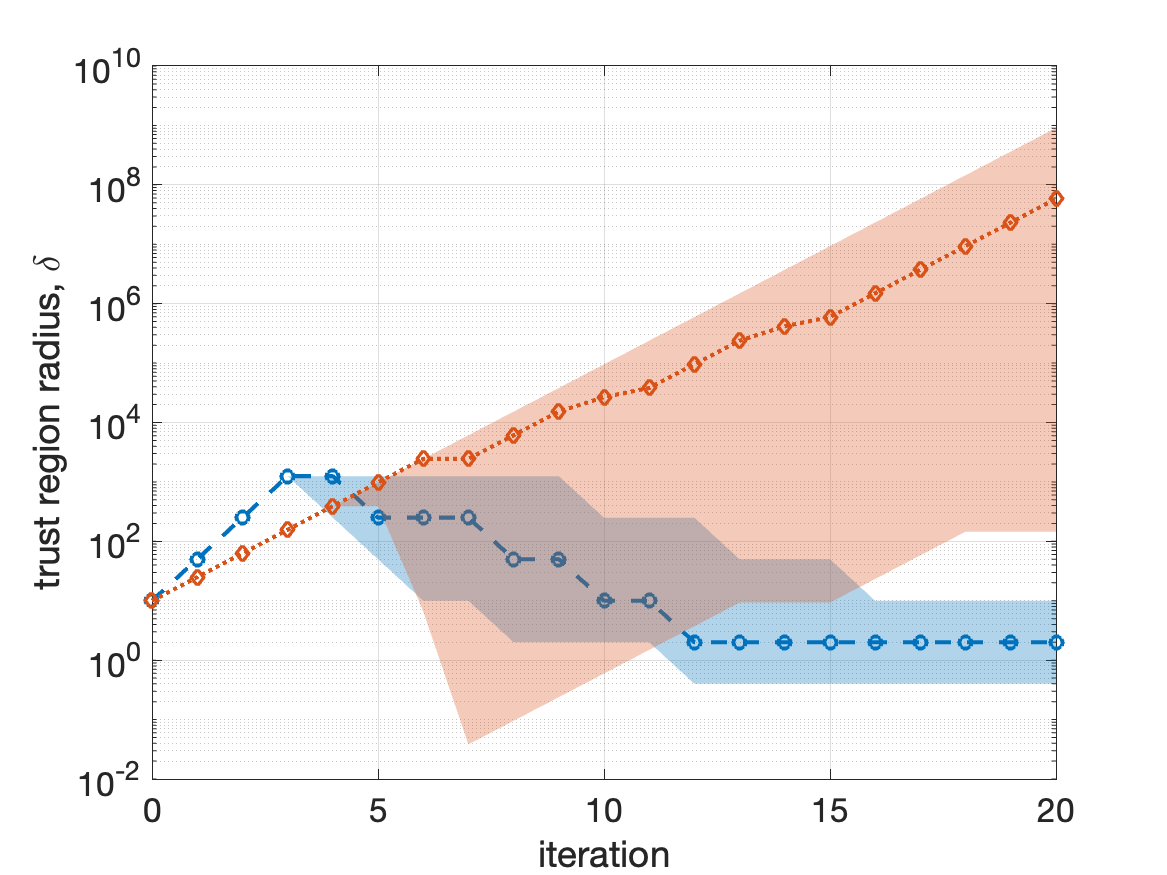}
\caption{The median trust region radii of ProxSTORM (circles connected with a dashed line) and the deterministic algorithm \cite{baraldi.2022} (diamonds connected with a dotted line).
In both cases, we include shaded regions that contain the largest and smallest trust region radii across the 100 trials.}\label{fig:topopt-trr}
\end{figure}

\section{Conclusion}\label{sec:cn}

We developed a new method, ProxSTORM (Algorithm~\ref{alg:ps}) for
objective functions that include convex but nonsmooth $\nobj$.
When $\nobj\equiv 0$, ProxSTORM reduces to STORM.
Our more general ProxSTORM algorithm inherits the key properties of first-order STORM; specifically, a limit-type global convergence guarantee (Theorem~\ref{thm:lim}) and an $\epsilon^{-2}$ complexity bound (Theorem~\ref{thm:co}).

Unlike ProxSTORM, deterministic trust-region methods \cite{conn.2000,baraldi.2022} do not typically require the trust-region radii to decrease to zero.
Close to an optimal solution, deterministic algorithms should take unencumbered Newton-like steps for fast convergence.
In contrast, the trust-region radii of the stochastic ProxSTORM and STORM methods satisfy the Robbins-and-Monro-like conditions \cite{robbins.1951}
$$
  \sum_{k=1}^\infty \Delta_k^2 < \infty \quad\text{and possibly}\quad \sum_{k=1}^\infty \Delta_k = \infty.
$$
In particular, $\lim_{k\to\infty} \Delta_k = 0$ (Corollary~\ref{cor:tozero}), meaning these stochastic trust-region methods will reject steps infinitely often, as that is the only mechanism for reducing $\Delta_k$.
For deterministic settings, inexactness conditions exist that do not drive the trust-region radii to zero.
The inexactness condition from \cite{baraldi.2022} for gradients of models
is effectively
$$
  \left\|\nabla M_k(X_k) - \nabla f (X_k)\right\| \le c\min\{\|H_k\|,\Delta_k\} \quad\text{for some constant } c.
$$
The stationarity condition appears on the right-hand side of this inequality, so the inequality is implemented using an iteration \cite[Algorithm 4]{baraldi.2022}.
Unfortunately, such iterations are ill-suited for stochastic settings like ours.
When an inequality does not hold with full probability, iterations like that for the condition above amplify the chances of an erroneous exit in a manner that is impractical to control.
Better understanding nuances like this in order to design rigorous yet practical stochastic algorithms is an interesting topic for future research.

\section{Acknowledgments}

This article has been authored by an employee of National Technology \& Engineering Solutions of Sandia, LLC under Contract No. DE-NA0003525 with the U.S. Department of Energy (DOE). The employee owns all right, title and interest in and to the article and is solely responsible for its contents. The United States Government retains and the publisher, by accepting the article for publication, acknowledges that the United States Government retains a non-exclusive, paid-up, irrevocable, world-wide license to publish or reproduce the published form of this article or allow others to do so, for United States Government purposes. The DOE will provide public access to these results of federally sponsored research in accordance with the DOE Public Access Plan:
$$\texttt{https://www.energy.gov/downloads/doe-public-access-plan}$$
This paper describes objective technical results and analysis. Any subjective views or opinions that might be expressed in the paper do not necessarily represent the views of the U.S. Department of Energy or the United States Government.

This work was supported by the Sandia Laboratory Directed Research and Development Program.

\begin{appendices}

\section{Proof of Theorem~\ref{thm:lim}}\label{sec:app1}

We first show that if the assumptions of Lemma~\ref{lem:incs} hold, then the ProxSTORM iterate and trust-region radius, $X_k$ and $\Delta_k$, respectively, satisfy
\begin{equation}\label{eq:sum2}
  \sum_{k=0}^{\infty}\mathbbm{1}\left(\|h(X_k)\|\ge\epsilon\right)\Delta_k <\infty
\end{equation}
with probability one for all positive real-valued random variables $\epsilon \in \filt$.
From the proof of Lemma~\ref{lem:incs}, we have that the right-hand side of \eqref{eq:bk2} bounds $\Circled{1}$ and that $\Circled{2}\le 0$.
It follows that there exist positive constants $c_7$ and $c_8$ such that
$$\mathbb{E}\left[(\dpsi)\big\vert\filt_{k-1}\right] - c_8\Delta_k^2 \le -\mathbbm{1}(\mathcal{B}_k)c_7\|h(X_k)\|\Delta_k.$$
Recall that $\mathbb{E}[\Psi_k - \Psi_{k+1}\vert\filt_{k-1}]$ is summable with probability one and so $\Delta_k$ is as well (Corollary~\ref{cor:tozero}).
As a result,
\begin{equation}\label{eq:limsum}
  \sum_{k=0}^{\infty}\mathbbm{1}\left(\|h(X_k)\|\ge\zeta\Delta_k\right)\|h(X_k)\|\Delta_k < \infty.
\end{equation}
with probability one.
Next, consider $\omega$ belonging to
\begin{equation}\label{eq:int2}
   \{({\rm\ref{eq:limsum}})\}
  \cap \{\epsilon > 0\}
  \cap \left\{ \lim_{k\to\infty}\Delta_k = 0\right\}.
\end{equation}
The event \eqref{eq:int2} is the intersection of probability one sets and thus has probability one.
Since $\omega$ belongs to the latter two events in \eqref{eq:int2}, there exists a $K(\omega)$ such that $\epsilon(\omega)\ge\zeta\Delta_k(\omega)$ for all $k\ge Kl\omega)$.
Hence, for all but finitely many $k$,
$$
   \mathbbm{1}(\|h(X_k)\|\ge\zeta\Delta_k)(\omega)\ge\mathbbm{1}(\|h(X_k)\|\ge\epsilon)(\omega).
$$
As a result,
\begin{equation*}
  \sum_{k=K(\omega)}^{\infty}\mathbbm{1}(\|h(X_k)\|\ge\epsilon)(\omega)\Delta_k(\omega)
  \le \sum_{k=K(\omega)}^{\infty}\mathbbm{1}(\|h(X_k)\|\ge\epsilon)(\omega)\frac{\|h(X_k(\omega))\|}{\epsilon(\omega)}\Delta_k(\omega)
\end{equation*}
with the right-hand side being finite since $\omega$ belongs to \eqref{eq:limsum}.
This argument shows that \eqref{eq:int2} is included in \eqref{eq:sum2}, so \eqref{eq:sum2} must have probability one.

We use \eqref{eq:sum2} holding with probability one to prove Theorem~\ref{thm:lim}.
The idea is to show that the probability of
\begin{equation}\label{eq:int3}
  \left\{\lim_{k\to\infty}\|h(X_k)\| \neq 0\right\} \cap \left\{\lim_{k\to\infty}\Delta_k = 0\right\}\cap\left\{\liminf_{k\to\infty}\|h(X_k)\| = 0\right\}
\end{equation}
is zero.
This will imply that the first set in the intersection has probability zero since the latter two sets have probability one.

Choose an $\omega$ in \eqref{eq:int3}.
Since $\omega$ belongs to the first set in \eqref{eq:int3}, there exists $\epsilon(\omega) > 0$ such that
$$\limsup_{k\to\infty} \|h(X_k(\omega))\| > 2\epsilon(\omega),$$
i.e., there exist infinitely many $k$ such that $\|h\left(X_k(\omega)\right)\| > 2\epsilon(\omega)$.
Since $\omega$ belongs to the third set in \eqref{eq:int3}, there exist infinitely many $k$ such that $\|h\left(X_k(\omega)\right)\| < \epsilon(\omega)$.
Thus, there are infinitely many pairs of indices $\left\{\big(k_{\ell}^{\prime}(\omega), k_{\ell}^{\prime\prime}(\omega)\big)\right\}_{\ell=1}^{\infty}$ for which
\begin{align*}
  \left\|h\left(X_{k_{\ell}^{\prime}(\omega)}(\omega)\right)\right\| < \epsilon(\omega) \quad\text{and}\quad
  \left\|h\left(X_{k_{\ell}^{\prime\prime}(\omega)}(\omega)\right)\right\| > 2\epsilon(\omega)
\end{align*}
with
\begin{align*}
  \|h\left(X_k(\omega)\right)\| \ge \epsilon(\omega) \quad\text{for all}\quad k_{\ell}^{\prime}(\omega) < k \le k_{\ell}^{\prime\prime}(\omega).
\end{align*}
We order these pairs of indices to be strictly increasing:
$
  k_1^{\prime}(\omega) < k_1^{\prime\prime}(\omega) < k_2^{\prime}(\omega) < \ldots
$
Observe that
\begin{multline*}
  \epsilon(\omega) < \left\|h\left(X_{k_{\ell}^{\prime\prime}(\omega)}(\omega)\right)\right\| -
                      \left\|h\left(X_{k_{\ell}^{\prime}(\omega)}(\omega)\right)\right\|
  \le \sum_{k=k_{\ell}^{\prime}(\omega)}^{k_{\ell}^{\prime\prime}(\omega)-1} \big\|h\left(X_{k+1}(\omega)\right) - h\left(X_k(\omega)\right)\big\|.
\end{multline*}
Since $\prox{r\nobj}(\cdot)$ is nonexpansive, the proximal gradient of $\sobj+\nobj$ is Lipschitz continuous.
In particular,
\begin{align*}
  \|h(y) - h(x)\|
&\le \frac{1}{r}\left(
 \left\|\prox{r\nobj}\left(y - r\nabla f(y)\right) - \prox{r\nobj}\left(x - r\nabla f(x)\right) \right\|
+\left\|y - x\right\|
\right) \\ 
  &\le \left(\frac{2}{r} + L\right)\|y - x\|.
\end{align*}
As a result,
\begin{align*}
  \epsilon(\omega) < \left(\frac{2}{r}+L\right)\sum_{k=k_{\ell}^{\prime}(\omega)}^{k_{\ell}^{\prime\prime}(\omega) - 1} \|X_{k+1}(\omega) - X_k(\omega)\|
  < \left(\frac{2}{r}+L\right)\sum_{k=k_{\ell}^{\prime}(\omega)}^{k_{\ell}^{\prime\prime}(\omega)}\Delta_k(\omega) .
\end{align*}
Since $\omega$ belongs to the second set in \eqref{eq:int3}, we have that for all $\ell$ sufficiently large,
$$\Delta_{k_{\ell}^{\prime}(\omega)}(\omega) < \frac{1}{2}\epsilon(\omega)\left(\frac{2}{r}+L\right)^{-1},$$
so for these $\ell$,
\begin{equation}\label{eq:unifbound}
  \frac{1}{2}\epsilon(\omega)\left(\frac{2}{r}+L\right)^{-1} < \sum_{k=k_{\ell}^{\prime}(\omega)+1}^{k_{\ell}^{\prime\prime}(\omega)}\Delta_k(\omega).
\end{equation}
We have that \eqref{eq:unifbound} holds for infinitely many $\ell$, meaning
$$\sum_{\ell=1}^{\infty}\sum_{k=k_{\ell}^{\prime}(\omega) + 1}^{k_{\ell}^{\prime\prime}(\omega)}\Delta_k(\omega) = \infty.$$
Note, however, that the definition of $\left\{\big(k_{\ell}^{\prime}(\omega),k_{\ell}^{\prime\prime}(\omega)\big)\right\}_{\ell=1}^{\infty}$
gives
\begin{equation*}
  \sum_{\ell=1}^{\infty}\sum_{k=k_{\ell}^{\prime}(\omega) + 1}^{k_{\ell}^{\prime\prime}(\omega)}\Delta_k(\omega)
  \le \sum_{k=0}^{\infty}\mathbbm{1}(\|h(X_k)\|\ge\epsilon)(\omega)\Delta_k(\omega),
\end{equation*}
so we have shown that \eqref{eq:int3} belongs to the complement of \eqref{eq:sum2}, which has probability zero.
We conclude \eqref{eq:int3} has probability zero, completing the proof.

\end{appendices}

\bibliographystyle{abbrvnat}
\bibliography{references.bib}

\end{document}